\newcommand{\closed}{\mathrm{cl}}
\newcommand{\etale}{\mathrm{\acute{e}t}}
\newcommand{\tame}{\mathrm{t}}
\newcommand{\abelian}{\mathrm{ab}}
\newcommand{\metabelian}{\mathrm{meta}}
\newcommand{\separable}{\mathrm{sep}}
\newcommand{\geometric}{\mathrm{geo}}
\newcommand{\cuspidal}{\mathrm{c}}
\newcommand{\noncuspidal}{\mathrm{nc}}
\newcommand{\Frobenius}{\mathrm{Fr}}
\newcommand{\FunctionField}[1]{K(#1)}
\DeclareMathOperator{\Aut}{Aut}
\DeclareMathOperator{\Inn}{Inn}
\DeclareMathOperator{\Out}{Out}
\DeclareMathOperator{\Isom}{Isom}
\DeclareMathOperator{\Spec}{Spec}
\DeclareMathOperator{\rank}{rank}
\DeclareMathOperator{\ord}{ord}
\let\plim\varprojlim
\newcommand{\GeneSubgrp}[1]{\langle #1 \rangle}
\newcommand{\GeneSubgrpTop}[1]{\overline{\GeneSubgrp{#1}}}
\newcommand{\Commensurator}[2]{\operatorname{Com}_{#1}\!\left(#2\right)}
\newcommand{\FinStepSolvQuo}[2]{#1^{#2}}
\newcommand{\FinStepSolvQuoGeom}[2]{#1^{(#2)}}
\newcommand{\AbsGalGrp}[1]{G_{#1}}
\newcommand{\EtFundGrpWithPt}[2]{\pi_{1}^{\etale}(#1,#2)}
\newcommand{\EtFundGrpTameWithPt}[2]{\pi_{1}^{\mathrm{tame}}(#1,#2)}
\newcommand{\EtFundGrpGeom}[1]{\Delta_{#1}}
\newcommand{\EtFundGrpPi}[1]{\Pi_{#1}}
\newcommand{\UnivCov}[1]{\widetilde{#1}}
\newcommand{\FieldAlgeClosure}[1]{\overline{#1}}
\newcommand{\FieldSepClosure}[1]{#1^{\separable}}
\newcommand{\CardinalityOfSet}[1]{\# #1}
\newcommand{\CenterSubgrp}[1]{\operatorname{Z}\!\left(#1\right)}
\newcommand{\Compactification}[1]{#1^{\mathrm{cpt}}}
\numberwithin{equation}{section}
\theoremstyle{plain}
\newtheorem{theorem}{Theorem}[section]
\newtheorem{Itheorem}{Theorem}
\newtheorem{proposition}[theorem]{Proposition}
\newtheorem{lemma}[theorem]{Lemma}
\newtheorem*{lemma*}{Lemma}
\newtheorem{corollary}[theorem]{Corollary}
\newtheorem*{conjecture*}{Conjecture}
\theoremstyle{definition}
\newtheorem{definition}[theorem]{Definition}
\newtheorem{notation}[theorem]{Notation}
\newtheorem{Inotation}[Itheorem]{Notation}
\newtheorem{remark}[theorem]{Remark}
\title[Metabelian Grothendieck conjecture for genus zero curves]{The metabelian Grothendieck conjecture for genus zero curves over finitely generated fields}
\date{Version of \today}
\author[N.~Yamaguchi]{Naganori Yamaguchi}
\address{Institute of Science Tokyo, 2-12-1 Ookayama, Meguro-ku, Tokyo 152-8550, Japan}
\email{yamaguchi.n.ac@m.titech.ac.jp}
\subjclass[2020]{Primary 14H30; Secondary 14E20}
\keywords{anabelian geometry, \'etale fundamental group, Grothendieck conjecture}
\thanks{This work was supported by JSPS KAKENHI Grant Number 23KJ0881.}
\begin{document}

\begin{abstract}
	In this paper, we prove the metabelian Grothendieck conjecture for genus-zero curves over finitely generated fields.
	More precisely, we show that two hyperbolic genus-zero curves are isomorphic over the base field, up to Frobenius twist in positive characteristic, if and only if their geometrically maximal metabelian tame fundamental groups are isomorphic over the absolute Galois group of the base field.
\end{abstract}

\maketitle
\tableofcontents

\section*{Introduction}

In a letter to G.~Faltings, A.~Grothendieck proposed the following conjecture (see~\cite{MR1483108}):

\medskip

\begin{quote}\itshape
	Let $k$ be a field finitely generated over $\mathbb{Q}$, let $\FieldSepClosure{k}$ be a separable closure of $k$, and write $\AbsGalGrp{k}\coloneq \mathrm{Gal}(\FieldSepClosure{k}/k)$.
	Then the geometry of an ``\emph{anabelian}'' variety $X$ over $k$ may be reconstructed group-theoretically from the \'etale fundamental group $\EtFundGrpWithPt{X}{\ast}$ together with its natural projection $\mathrm{pr}\colon \EtFundGrpWithPt{X}{\ast}\to \AbsGalGrp{k}$.
\end{quote}

\medskip

In that letter, A.~Grothendieck did not specify precisely which varieties should be called anabelian.
He did, however, conjecture that hyperbolic curves are anabelian.
Here, we say that a smooth curve of type $(g,r)$ is \emph{hyperbolic} if $2-2g-r<0$.
We always assume that smooth curves are geometrically connected.
The Grothendieck conjecture for hyperbolic curves was proved in the genus-zero case by H.~Nakamura \cite{MR1072981}, in the affine case by A.~Tamagawa \cite{MR1478817}, and, in characteristic zero and in a form stronger than the original conjecture, by S.~Mochizuki \cite{MR1720187}; see also~\cite{MR1432110}.
For the positive-characteristic case over finitely generated fields, see J.~Stix \cite{MR2012864}.
The following consequence of Mochizuki's theorem is usually referred to as the \emph{weak form} of the Grothendieck conjecture in characteristic zero:

\medskip

\begin{quote}\itshape
	Let $k$ be a field finitely generated over $\mathbb{Q}$, and let $X_{1}$ and $X_{2}$ be hyperbolic curves over $k$.
	Then
	\begin{equation*}
		X_{1}
		\cong_{k}
		X_{2}
		\iff
		\EtFundGrpWithPt{X_{1}}{\ast}
		\cong_{\AbsGalGrp{k}}
		\EtFundGrpWithPt{X_{2}}{\ast}
	\end{equation*}
	(see~\cite[Theorem~16.5]{MR1720187}).
\end{quote}

\bigskip

\noindent
{\bf The metabelian truncation}

\bigskip

It is natural to consider truncated versions of the Grothendieck conjecture, namely, variants in which the \'etale fundamental group is replaced by a suitable group-theoretic quotient.
For example, the pro-$p$ case was studied by S.~Mochizuki \cite{MR1720187}, and the $m$-step solvable case for hyperbolic curves was studied by H.~Nakamura \cite{MR1040998}, by S.~Mochizuki \cite{MR1720187}, and by the author \cite{MR4578639,MR4745885}.
Since the Galois action on the maximal abelian quotient of the geometric étale fundamental group of a projective line minus rational points is described entirely by cyclotomic characters, this abelian quotient cannot recover the isomorphism class of the curve.
The metabelian quotient is therefore the first $m$-step solvable quotient for which a positive anabelian statement may still be expected.
In this paper, we investigate this quotient in the genus-$0$ case.

\medskip

To state our main results, we fix the following notation:
For any profinite group $G$, we write $G' \coloneq \overline{[G,G]}$, $G'' \coloneq \overline{[G',G']}$, and define $G^{\abelian}\coloneq G/G'$, $G^{\metabelian}\coloneq G/G''$.
We call them the \emph{maximal abelian quotient} and the \emph{maximal metabelian quotient} of $G$, respectively.
For any smooth curve $X$ over a field $k$ with separable closure $\FieldSepClosure{k}$, we write
\begin{equation*}
	\EtFundGrpGeom{X}^{\tame}
	\coloneq
	\EtFundGrpTameWithPt{X_{\FieldSepClosure{k}}}{\ast},
	\qquad
	\EtFundGrpPi{X}^{\tame}
	\coloneq
	\EtFundGrpTameWithPt{X}{\ast}.
\end{equation*}
We then define two natural quotients of $\EtFundGrpPi{X}^{\tame}$ by
\begin{equation*}
	\FinStepSolvQuoGeom{(\EtFundGrpPi{X}^{\tame})}{\abelian}
	\coloneq
	\EtFundGrpPi{X}^{\tame}/(\EtFundGrpGeom{X}^{\tame})',
	\qquad
	\FinStepSolvQuoGeom{(\EtFundGrpPi{X}^{\tame})}{\metabelian}
	\coloneq
	\EtFundGrpPi{X}^{\tame}/(\EtFundGrpGeom{X}^{\tame})''.
\end{equation*}
Here, to treat the positive-characteristic case uniformly, we work with the tame fundamental group (see~\cite{MR316453}) rather than the \'etale fundamental group.
In characteristic zero, these two groups coincide.

In this notation, the metabelian Grothendieck conjecture can be formulated as follows:

\medskip

\begin{quote}\itshape
	Let $k$ be a field finitely generated over $\mathbb{Q}$, and let $X_{1}$ and $X_{2}$ be hyperbolic curves over $k$.
	Then
	\begin{equation}\label{GCmetaconjeq}
		X_{1}
		\cong_{k}
		X_{2}
		\iff
		\FinStepSolvQuoGeom{(\EtFundGrpPi{X_{1}}^{\tame})}{\metabelian}
		\cong_{\AbsGalGrp{k}}
		\FinStepSolvQuoGeom{(\EtFundGrpPi{X_{2}}^{\tame})}{\metabelian}.
	\end{equation}
\end{quote}
\medskip
Several cases of the $m$-step solvable Grothendieck conjecture are already known; for example, the conjecture was proved for $m\geq 5$ by S.~Mochizuki \cite[Theorem~A$'$]{MR1720187}.
To the best of the author's knowledge, however, the only earlier result for the metabelian Grothendieck conjecture over a field of characteristic $0$ was the following special case due to H.~Nakamura:

\medskip

\begin{quote}\itshape
	Let $k$ be an algebraic number field satisfying either of the following conditions:
	\begin{enumerate}[(i)]
		\item\label{gwae}
		      $k$ is a quadratic field different from $\mathbb{Q}(\sqrt{2})$;
		\item\label{ewfaE}
		      there exists a prime ideal $\mathfrak{p}$ of $\mathcal{O}_{k}$, unramified in $k/\mathbb{Q}$, such that
		      \begin{equation*}
			      \CardinalityOfSet{\mathcal{O}_{k}/\mathfrak{p}}=2.
		      \end{equation*}
	\end{enumerate}
	Let $\lambda_{1},\lambda_{2}\in k\setminus\{0,1\}$ and define
	\begin{equation*}
		X_{1}\coloneq\mathbb{P}^{1}_{k}\setminus\{0,1,\infty,\lambda_{1}\},
		\qquad
		X_{2}\coloneq\mathbb{P}^{1}_{k}\setminus\{0,1,\infty,\lambda_{2}\}.
	\end{equation*}
	Then the equivalence \eqref{GCmetaconjeq} holds.
	(See~\cite[Theorem~A]{MR1040998}.)
\end{quote}
\medskip
Our first main theorem shows that the metabelian Grothendieck conjecture holds for genus-zero curves with $r\geq 5$:

\bigskip

\begin{Itheorem}[Corollary~\ref{fingeneweak}\ref{fingeneweakzero}]\label{Ithm_zero}
	Let $k$ be a field finitely generated over $\mathbb{Q}$, and let $X_{1}$ and $X_{2}$ be smooth curves over $k$.
	Assume that $g_{1}=0$ and that $r_{1}\geq 5$.
	Then
	\begin{equation*}
		X_{1}
		\cong_{k}
		X_{2}
		\iff
		\FinStepSolvQuoGeom{(\EtFundGrpPi{X_{1}}^{\tame})}{\metabelian}
		\cong_{\AbsGalGrp{k}}
		\FinStepSolvQuoGeom{(\EtFundGrpPi{X_{2}}^{\tame})}{\metabelian}.
	\end{equation*}
\end{Itheorem}

\bigskip

We next turn to the positive-characteristic case.
Over a field of characteristic $p>0$, relative Frobenius twists must be taken into account, since they induce isomorphisms on tame fundamental groups.
Thus, the natural positive-characteristic analogue of the metabelian Grothendieck conjecture must be formulated up to Frobenius twist.

\medskip

The finite-field case was treated by the author in \cite[Theorem~2.16]{MR4745885}, under suitable assumptions.
In this paper, we first refine that result to the geometrically maximal pro-prime-to-$p$ setting.
Here, for any field $k$ of characteristic $p\geq 0$ and any smooth curve $X$ over $k$, we write
\begin{equation*}
	\EtFundGrpGeom{X}
	\coloneq
	\EtFundGrpTameWithPt{X_{\FieldSepClosure{k}}}{\ast}^{p'},
	\qquad
	\EtFundGrpPi{X}
	\coloneq
	\EtFundGrpPi{X}^{\tame}
	/
	\ker(\EtFundGrpPi{X}^{\tame}\to\EtFundGrpGeom{X}).
\end{equation*}
Moreover, we similarly define $\FinStepSolvQuoGeom{\EtFundGrpPi{X}}{\abelian}$ and $\FinStepSolvQuoGeom{\EtFundGrpPi{X}}{\metabelian}$ as above.
The finite-field result established in this paper is as follows:

\bigskip

\begin{Itheorem}[Corollary~\ref{finitrelative}]\label{Ithm_positive_fin}
	Let $k$ be a finite field, and let $X_{1}$ and $X_{2}$ be smooth curves over $k$.
	Assume that $g_{1}=0$ and that $r_{1}\geq 5$.
	Then
	\begin{equation*}
		\FinStepSolvQuoGeom{\EtFundGrpPi{X_{1}}}{\metabelian}
		\cong_{\AbsGalGrp{k}}
		\FinStepSolvQuoGeom{\EtFundGrpPi{X_{2}}}{\metabelian}
	\end{equation*}
	if and only if there exists an integer $n$ such that $X_{1}(n)\cong_{k}X_{2}$.
\end{Itheorem}

\bigskip

The preceding theorem leads to the main positive-characteristic result of this paper.
Indeed, the specialization argument reduces the study of smooth curves over fields finitely generated over a finite field to the finite-field case.
By combining Theorem~\ref{Ithm_positive_fin} with this specialization argument, we obtain the following theorem over fields finitely generated over a finite field:

\bigskip

\begin{Itheorem}[Corollary~\ref{fingeneweak}\ref{fingeneweakpositive}]\label{Ithm_positive_inf}
	Let $k$ be a field finitely generated over a finite field, and let $X_{1}$ and $X_{2}$ be non-isotrivial smooth curves over $k$.
	Assume that $g_{1}=0$ and that $r_{1}\geq 5$.
	Then
	\begin{equation*}
		\FinStepSolvQuoGeom{\EtFundGrpPi{X_{1}}}{\metabelian}
		\cong_{\AbsGalGrp{k}}
		\FinStepSolvQuoGeom{\EtFundGrpPi{X_{2}}}{\metabelian},
	\end{equation*}
	if and only if there exists a pair $(n_{1},n_{2})$ of non-negative integers such that $X_{1}(n_{1})\cong_{k}X_{2}(n_{2})$.
\end{Itheorem}

\bigskip

The assumption $r_{1}\geq 5$ in Theorems~\ref{Ithm_zero},~\ref{Ithm_positive_fin}, and~\ref{Ithm_positive_inf} arises from our reconstruction of decomposition groups from the metabelian quotient.
More precisely, the argument requires a group-theoretical characterization to determine whether a given geometric quasi-section is cuspidal or non-cuspidal.
If such a characterization were established in the case $r_{1}=3$, then the same method would also apply to the case $r_{1}=4$.
At the time of writing, however, the author does not know how to obtain such a characterization in the case $r_{1}=3$, nor whether the metabelian Grothendieck conjecture holds in general when $r_{1}=4$; see Remark~\ref{r=5remark} for further discussion.
The case $r_{1}=3$, on the other hand, is exceptional, and the metabelian Grothendieck conjecture can be proved in this case; see Proposition~\ref{fingeneweak03}.

\medskip

The paper is organized as follows.
In Section~\ref{sectgtrvi}, we establish group-theoretical reconstructions of various invariants associated with the geometrically maximal abelian quotient.
In Section~\ref{sepsect}, we introduce geometric sections and quasi-sections, and prove separatedness properties for decomposition groups.
In Section~\ref{subsectionreco}, we reconstruct decomposition groups over finite fields from the metabelian quotient.
In Section~\ref{subsectionfinweak}, we prove Theorem~\ref{Ithm_positive_fin}.
Finally, in Section~\ref{subsectionfingeneweak}, we prove Theorems~\ref{Ithm_zero} and~\ref{Ithm_positive_inf}.

\section*{Notation and conventions}\label{sect1}

From now on, except in Theorems~\ref{finGCweak},~\ref{fingenemainthmchzero}, and~\ref{fingenemainthmchpositive}, we fix the following notation:

\begin{Inotation}\label{notation_basic1}
	Let $i=1,2$.
	Let $k_{i}$ be a field finitely generated over its prime field and of characteristic $p_{i}\geq 0$.
	Let $(\Compactification{X}_{i},E_{i})$ be a smooth curve of type $(0,r_{i})$ over $k_{i}$, and set $X_{i}\coloneq \Compactification{X}_{i}\setminus E_{i}$.
	If there is no risk of confusion, we write
	\begin{equation*}
		\EtFundGrpGeom{i},\qquad
		\FinStepSolvQuoGeom{\EtFundGrpPi{i}}{\abelian},\qquad
		\FinStepSolvQuoGeom{\EtFundGrpPi{i}}{\metabelian}
	\end{equation*}
	in place of
	$\EtFundGrpGeom{X_{i}}$, $\FinStepSolvQuoGeom{\EtFundGrpPi{X_{i}}}{\abelian}$, and $\FinStepSolvQuoGeom{\EtFundGrpPi{X_{i}}}{\metabelian}$,
	respectively.
	If there is no risk of confusion, we omit the index $i$ from this notation.
\end{Inotation}

Although several local arguments in this paper apply more generally to curves of $p$-rank $0$, the main theorems are proved only for genus-$0$ curves.
Accordingly, we do not treat the case of $p$-rank $0$ in full generality in this paper.
For notational simplicity, we also assume that $g_{1}=g_{2}=0$.

\section{Group-theoretical reconstructions of various invariants}\label{sectgtrvi}

In this section, we give group-theoretical reconstructions of various geometric invariants of smooth curves.
We first introduce the weight filtration of $\FinStepSolvQuo{\EtFundGrpGeom{}}{\abelian}$.
By construction, we have the following exact sequence, called the \emph{homotopy exact sequence}:
\begin{equation}\label{homexseq0}
	1
	\to\FinStepSolvQuo{\EtFundGrpGeom{}}{\abelian}
	\to\FinStepSolvQuoGeom{\EtFundGrpPi{}}{\abelian}
	\to\AbsGalGrp{k}
	\to 1
\end{equation}
This induces a Galois representation $\AbsGalGrp{k}\to \Aut(\FinStepSolvQuo{\EtFundGrpGeom{}}{\abelian})$.
We regard $\FinStepSolvQuo{\EtFundGrpGeom{}}{\abelian}$ as a $\AbsGalGrp{k}$-module via this Galois representation.

Let $\FinStepSolvQuoGeom{\UnivCov{\Compactification{X}}}{\abelian}$ denote the maximal geometrically abelian pro-prime-to-$p$ Galois covering of $\Compactification{X}$ that is \'etale over $X$, and for any closed point $\UnivCov{v}$ of $\FinStepSolvQuoGeom{\UnivCov{\Compactification{X}}}{\abelian}$, we define
\begin{equation*}
	D_{\UnivCov{v}}\coloneq D_{\UnivCov{v},\FinStepSolvQuoGeom{\EtFundGrpPi{}}{\abelian}}
	\coloneq
	\{\gamma\in \FinStepSolvQuoGeom{\EtFundGrpPi{}}{\abelian}\mid \gamma(\UnivCov{v})=\UnivCov{v}\}
\end{equation*}
\begin{equation*}
	I_{\UnivCov{v}}\coloneq I_{\UnivCov{v}, \FinStepSolvQuo{\EtFundGrpGeom{}}{\abelian}}
	\coloneq
	I_{\UnivCov{v}, \FinStepSolvQuoGeom{\EtFundGrpPi{}}{\abelian}}\coloneq \{\gamma\in D_{\UnivCov{v}}\mid \gamma \text{ acts trivially on the residue field }\kappa(\UnivCov{v})\},
\end{equation*}
and call these the \emph{decomposition group} at $\UnivCov{v}$ and the \emph{inertia group} at $\UnivCov{v}$, respectively.
In the abelian case, the inertia group $I_{\UnivCov{v}, \FinStepSolvQuo{\EtFundGrpGeom{}}{\abelian}}$ depends only on the image $v$ of $\UnivCov{v}$ under the natural projection $\FinStepSolvQuoGeom{\UnivCov{\Compactification{X}}}{\abelian}\to \Compactification{X}$.
Hence we sometimes write $I_{v, \FinStepSolvQuo{\EtFundGrpGeom{}}{\abelian}}$ instead of $I_{\UnivCov{v}, \FinStepSolvQuo{\EtFundGrpGeom{}}{\abelian}}$ for simplicity.
We write
\begin{equation*}
	\mathrm{Dec}(\FinStepSolvQuoGeom{\EtFundGrpPi{}}{\abelian}),\qquad
	\mathrm{Dec}^{\cuspidal}(\FinStepSolvQuoGeom{\EtFundGrpPi{}}{\abelian}),\qquad
	\mathrm{Dec}^{\noncuspidal}(\FinStepSolvQuoGeom{\EtFundGrpPi{}}{\abelian}),\qquad
	\mathrm{Iner}(\FinStepSolvQuoGeom{\EtFundGrpPi{}}{\abelian}),\qquad
	\mathrm{Iner}^{\neq\{1\}}(\FinStepSolvQuoGeom{\EtFundGrpPi{}}{\abelian})
\end{equation*}
for the $\FinStepSolvQuoGeom{\EtFundGrpPi{}}{\abelian}$-sets of decomposition groups, cuspidal decomposition groups (that is, decomposition groups at closed points of $E$), non-cuspidal decomposition groups, inertia groups, and nontrivial inertia groups, respectively.

\begin{lemma}\label{geomreco1}
	We keep the notation of Notation~\ref{notation_basic1}.
	Let
	\begin{equation*}
		\Phi\colon\FinStepSolvQuoGeom{\EtFundGrpPi{1}}{\metabelian}\to\FinStepSolvQuoGeom{\EtFundGrpPi{2}}{\metabelian}
	\end{equation*}
	be an isomorphism.
	Then the following hold:
	\begin{enumerate}[(1)]
		\item \label{finitereco}
		      The field $k_{1}$ is finite if and only if $k_{2}$ is finite.
		\item\label{geompireco}
		      The morphism $\Phi$ maps $\FinStepSolvQuo{\EtFundGrpGeom{1}}{\metabelian}$ to $\FinStepSolvQuo{\EtFundGrpGeom{2}}{\metabelian}$.
		      In particular, $\Phi$ induces a unique isomorphism $\FinStepSolvQuoGeom{\Phi}{\abelian}\colon\FinStepSolvQuoGeom{\EtFundGrpPi{1}}{\abelian}\to\FinStepSolvQuoGeom{\EtFundGrpPi{2}}{\abelian}$ such that the following diagram is commutative:
		      \begin{equation*}
			      \xymatrix{
			      \FinStepSolvQuoGeom{\EtFundGrpPi{1}}{\metabelian}
			      \ar@{->>}[d]
			      \ar[r]^-{\Phi}
			      &
			      \FinStepSolvQuoGeom{\EtFundGrpPi{2}}{\metabelian}
			      \ar@{->>}[d]
			      \\
			      \FinStepSolvQuoGeom{\EtFundGrpPi{1}}{\abelian}
			      \ar[r]^-{\FinStepSolvQuoGeom{\Phi}{\abelian}}
			      &
			      \FinStepSolvQuoGeom{\EtFundGrpPi{2}}{\abelian}
			      }
		      \end{equation*}
		      Here, the vertical arrows are the natural projections.
	\end{enumerate}
\end{lemma}

\begin{proof}
	\noindent\ref{finitereco}
	We have a surjection from a finitely generated free profinite group to $\EtFundGrpGeom{i}$ (see~\cite{MR0354651}), and hence $\FinStepSolvQuo{\EtFundGrpGeom{i}}{\metabelian}$ is also finitely generated.
	Moreover, $\AbsGalGrp{k_{i}}$ is finitely generated if and only if $k_{i}$ is finite.
	Thus, $k_{i}$ is finite if and only if $\FinStepSolvQuoGeom{\EtFundGrpPi{i}}{\metabelian}$ is finitely generated.
	In particular, the condition that $k_{i}$ is finite can be reconstructed group-theoretically from $\FinStepSolvQuoGeom{\EtFundGrpPi{i}}{\metabelian}$.
	This completes the proof.

	\noindent\ref{geompireco}
	When $k_{i}$ is finite, the group $\FinStepSolvQuo{\EtFundGrpGeom{i}}{\metabelian}$ coincides with the kernel of the natural morphism
	\begin{equation*}
		\FinStepSolvQuoGeom{\EtFundGrpPi{i}}{\metabelian}
		\twoheadrightarrow
		(\FinStepSolvQuoGeom{\EtFundGrpPi{i}}{\metabelian})^{\abelian}
		\big/
		\mathrm{tor}\left((\FinStepSolvQuoGeom{\EtFundGrpPi{i}}{\metabelian})^{\abelian}\right)
	\end{equation*}
	(see~\cite[Proposition~(3.3)(ii)]{MR1478817}).
	If $k_{i}$ is infinite, then $k_{i}$ is a Hilbertian field and $\AbsGalGrp{k_{i}}$ does not have a normal closed finitely generated subgroup (see~\cite[Theorem~13.4.2]{MR0868860} and \cite[Theorem~15.10]{MR2445111}).
	Hence $\FinStepSolvQuo{\EtFundGrpGeom{i}}{\metabelian}$ is the maximal normal closed finitely generated subgroup of $\FinStepSolvQuoGeom{\EtFundGrpPi{i}}{\metabelian}$.
	Thus, in both cases, $\FinStepSolvQuo{\EtFundGrpGeom{i}}{\metabelian}$, and hence $\FinStepSolvQuoGeom{\EtFundGrpPi{i}}{\abelian}$, can be reconstructed group-theoretically from $\FinStepSolvQuoGeom{\EtFundGrpPi{i}}{\metabelian}$.
	This proves the first sentence of \ref{geompireco}.
	The existence of $\FinStepSolvQuoGeom{\Phi}{\abelian}$ follows from this.
	The uniqueness of $\FinStepSolvQuoGeom{\Phi}{\abelian}$ follows from the surjectivity of the projection $\FinStepSolvQuoGeom{\EtFundGrpPi{i}}{\metabelian}\to\FinStepSolvQuoGeom{\EtFundGrpPi{i}}{\abelian}$.
	This completes the proof.
\end{proof}

Since $X$ has genus $0$, when $r\geq 1$ we have an exact sequence of $\AbsGalGrp{k}$-modules
\begin{equation}\label{wfseq}
	0
	\to \hat{\mathbb{Z}}^{p'}(1)
	\to \mathbb{Z}[E(\FieldSepClosure{k})]\bigotimes_{\mathbb{Z}} \hat{\mathbb{Z}}^{p'}(1)
	\xrightarrow{f}\FinStepSolvQuo{\EtFundGrpGeom{}}{\abelian}
	\to 0.
\end{equation}
Here, the group $\mathbb{Z}[E(\FieldSepClosure{k})]$ is the free $\mathbb{Z}$-module with basis $E(\FieldSepClosure{k})$ and is regarded as a $\AbsGalGrp{k}$-module via the natural $\AbsGalGrp{k}$-action on $E(\FieldSepClosure{k})$; the module $\hat{\mathbb{Z}}^{p'}(1)$ is the first Tate twist of $\hat{\mathbb{Z}}^{p'}$; the second arrow of the exact sequence \eqref{wfseq} is defined by sending $1$ to the diagonal $\sum_{v\in E(\FieldSepClosure{k})}v\otimes 1$; and $f$ is defined as the morphism sending $v\otimes 1$ to a (topological) generator of the inertia group $I_{v, \FinStepSolvQuo{\EtFundGrpGeom{}}{\abelian}}$ at $v\in E(\FieldSepClosure{k})$.

\begin{lemma}\label{geomreco2}
	We keep the notation of Notation~\ref{notation_basic1}.
	Assume that $\FinStepSolvQuo{\EtFundGrpGeom{}}{\abelian}$ is nontrivial; equivalently, assume that $r\geq 2$.
	Put $q\coloneq\CardinalityOfSet{k}$.
	Then the following hold:
	\begin{enumerate}[(1)]
		\item\label{rreco}
		      We have $r=\rank_{\hat{\mathbb{Z}}^{p'}}(\FinStepSolvQuo{\EtFundGrpGeom{}}{\abelian})+1$.
		\item\label{nodereco}
		      Define the $\AbsGalGrp{k}$-module
		      \begin{equation*}
			      W\coloneq \left(\bigwedge^{\mathrm{top}}\FinStepSolvQuo{\EtFundGrpGeom{}}{\abelian}\right)^{\otimes 2}
		      \end{equation*}
		      where $\bigwedge^{\mathrm{top}}$ stands for the highest exterior power of free $\hat{\mathbb{Z}}^{p'}$-modules and $\otimes$ denotes the tensor product of $\AbsGalGrp{k}$-modules.
		      Assume that $k$ is finite.
		      Then the coinvariants $\left(W\right)_{\AbsGalGrp{k}}$ are finite and we have
		      \begin{equation*}
			      q^{2(r-1)}
			      =
			      \CardinalityOfSet{\left(W\right)_{\AbsGalGrp{k}}}+1.
		      \end{equation*}
		\item\label{Frobreco}
		      Assume that $k$ is finite.
		      Let
		      \begin{equation*}
			      \rho\colon\AbsGalGrp{k}\to \Aut(W)=(\hat{\mathbb{Z}}^{p'})^{\times}
		      \end{equation*}
		      be the natural character.
		      Then the Frobenius element $\Frobenius_{k}\in \AbsGalGrp{k}$ is the unique element of $\AbsGalGrp{k}$ satisfying the following conditions:
		      \begin{enumerate}[(a)]
			      \item\label{Frobreco-a}
			            $\AbsGalGrp{k}$ is topologically generated by $\Frobenius_{k}$.
			      \item\label{Frobreco-b}
			            $\rho(\Frobenius_{k})\in p^{\mathbb{Z}_{\geq 0}}$.
		      \end{enumerate}
	\end{enumerate}
\end{lemma}

\begin{proof}
	\noindent\ref{rreco}
	The assertion follows from \eqref{wfseq}.

	\noindent\ref{nodereco}
	The $\AbsGalGrp{k}$-module
	\begin{equation*}
		\bigwedge^{\mathrm{top}}\FinStepSolvQuo{\EtFundGrpGeom{}}{\abelian}
	\end{equation*}
	has rank $1$ as a $\hat{\mathbb{Z}}^{p'}$-module with the $\AbsGalGrp{k}$-action
	\begin{equation}\label{rhocalu}
		\lambda^{\CardinalityOfSet{E(\FieldSepClosure{k})}-\CardinalityOfSet{E}}\chi^{r-1},
	\end{equation}
	where the morphism $\chi\colon\AbsGalGrp{k}\to (\hat{\mathbb{Z}}^{p'})^{\times}$ stands for the cyclotomic character and we set $\lambda\colon \AbsGalGrp{k}\cong\hat{\mathbb{Z}}\twoheadrightarrow\mathbb{Z}/2\mathbb{Z}\simeq \{\pm1\}\hookrightarrow (\hat{\mathbb{Z}}^{p'})^{\times}$.
	This implies that $W\cong \hat{\mathbb{Z}}^{p'}(2(r-1))$ as $\AbsGalGrp{k}$-modules.
	Hence the assertion follows from the fact that $\CardinalityOfSet{(\hat{\mathbb{Z}}^{p'}(1))_{\AbsGalGrp{k}}}=q-1$.

	\noindent\ref{Frobreco}
	By \eqref{rhocalu}, we have
	\begin{equation*}
		\rho(\Frobenius_{k})=q^{2(r-1)}.
	\end{equation*}
	In particular, $\rho(\Frobenius_{k})\in p^{\mathbb{Z}_{\geq 0}}$, and hence $\Frobenius_{k}$ satisfies conditions~\ref{Frobreco-a} and~\ref{Frobreco-b}.
	Conversely, let $\gamma\in \AbsGalGrp{k}$ be an element satisfying conditions~\ref{Frobreco-a} and~\ref{Frobreco-b}.
	Since $\AbsGalGrp{k}$ is topologically generated by $\Frobenius_{k}$, there exists $a\in \hat{\mathbb{Z}}^{\times}$ such that $\gamma=\Frobenius_{k}^{a}$.
	Hence
	\begin{equation*}
		\rho(\gamma)
		=
		\rho(\Frobenius_{k})^{a}
		=
		q^{2(r-1)a}
		=
		p^{2(r-1)fa},
	\end{equation*}
	where $f$ is the unique positive integer satisfying $q=p^{f}$.
	On the other hand, condition~\ref{Frobreco-b} implies that $\rho(\gamma)=\pm p^{n}$ for some $n\in \mathbb{Z}_{\geq 0}$, which implies that $2(r-1)fa=n$.
	Thus
	\begin{equation*}
		a
		=
		\frac{n}{2(r-1)f}
		\in
		\mathbb{Q}\cap \hat{\mathbb{Z}}^{\times}
		=
		\{\pm 1\}.
	\end{equation*}
	If $a=-1$, then $\rho(\gamma)=q^{-2(r-1)}\notin p^{\mathbb{Z}_{\geq 0}}$, which contradicts condition~\ref{Frobreco-b}.
	Therefore $a=1$; equivalently, $\gamma=\Frobenius_{k}$.
	This proves the uniqueness.
\end{proof}

\begin{lemma}\label{geomreco2_inerreco}
	We keep the notation of Notation~\ref{notation_basic1}.
	Let
	\begin{equation*}
		\Phi\colon
		\FinStepSolvQuoGeom{\EtFundGrpPi{1}}{\metabelian}
		\xrightarrow{\ \sim\ }
		\FinStepSolvQuoGeom{\EtFundGrpPi{2}}{\metabelian}
	\end{equation*}
	be an isomorphism.
	Assume that $\FinStepSolvQuo{\EtFundGrpGeom{1}}{\abelian}$ is nontrivial.
	Then the induced isomorphism $\FinStepSolvQuoGeom{\Phi}{\abelian}$ preserves inertia groups.
\end{lemma}

\begin{proof}
	The assertion follows from \cite[Proposition~1.12(1)]{MR4745885}.
\end{proof}

\section{Separatedness property for decomposition groups over finite fields}\label{sepsect}

In this section, we introduce sections and establish a separatedness property for decomposition groups and geometric quasi-sections using the Kummer homomorphism.
When $k$ is finite, a similar result is already known for decomposition groups of the maximal geometrically metabelian tame fundamental group (see~\cite[Proposition~2.6]{MR4745885}).
We begin with the following definition:

\begin{definition}\label{def:sections_m1}
	We keep the notation of Notation~\ref{notation_basic1}.
	Let $G$ be an open subgroup of $\AbsGalGrp{k}$, and let $\iota\coloneq \iota_{G}\colon G\hookrightarrow \AbsGalGrp{k}$ be the inclusion.
	We say that a continuous homomorphism
	\begin{equation*}
		s\colon G\to \FinStepSolvQuoGeom{\EtFundGrpPi{}}{\abelian}
	\end{equation*}
	is a \emph{section} of the restriction of $\mathrm{pr}$ if $\mathrm{pr}\circ s=\iota$.
	We denote the set of all sections by
	\begin{equation*}
		\mathcal{S}_{G}\coloneq
		\mathcal{S}_{G,\FinStepSolvQuoGeom{\EtFundGrpPi{}}{\abelian}}
	\end{equation*}
	A section $s\in \mathcal{S}_{G}$ is \emph{geometric} if there exists a closed point $\UnivCov{v}\in \FinStepSolvQuoGeom{\UnivCov{\Compactification{X}}}{\abelian}$ such that
	\begin{equation*}
		s(G)\subset D_{\UnivCov{v}}.
	\end{equation*}
	In this case, we say that $s$ is \emph{at} $\UnivCov{v}$ (or at its image $v\in \Compactification{X}$).
	When $v$ is a non-cuspidal point (resp.\ a cuspidal point), we say that $s$ is \emph{non-cuspidal} (resp.\ \emph{cuspidal}).
	We write
	\begin{equation*}
		\mathcal{S}^{\geometric}_{G},\qquad
		\mathcal{S}^{\geometric,\noncuspidal}_{G},\qquad
		\mathcal{S}^{\geometric,\cuspidal}_{G}
	\end{equation*}
	for the sets of geometric, non-cuspidal geometric, and cuspidal geometric sections, respectively.
\end{definition}

\begin{remark}\label{rem:sections_coh_m1}
	We keep the notation of Notation~\ref{notation_basic1}.
	Let $G$ be an open subgroup of $\AbsGalGrp{k}$.
	\begin{enumerate}[(1)]
		\item\label{rem:sections_coh_m1_1}
		      Let $u\in X(\FieldAlgeClosure{k}^{G})$ and let $\UnivCov{u}\in \FinStepSolvQuoGeom{\UnivCov{\Compactification{X}}}{\abelian}$ be a closed point lying over $u$.
		      The restriction $\mathrm{pr}|_{D_{\UnivCov{u}}}\colon D_{\UnivCov{u}}\twoheadrightarrow G$ is an isomorphism, and hence determines a section
		      \begin{equation*}
			      s_{\UnivCov{u}}\colon
			      G\xrightarrow{\ \sim\ }
			      D_{\UnivCov{u}}
			      \subset
			      \FinStepSolvQuoGeom{\EtFundGrpPi{}}{\abelian},
		      \end{equation*}
		      which is geometric by definition.
		      Conversely, if $s\colon G\to \FinStepSolvQuoGeom{\EtFundGrpPi{}}{\abelian}$ is a geometric section at a closed point $\UnivCov{v}\in \FinStepSolvQuoGeom{\UnivCov{\Compactification{X}}}{\abelian}$ lying over $v\in \Compactification{X}$, then $v$ is $\FieldAlgeClosure{k}^{G}$-rational (see~\cite[Lemma~2.6]{MR1478817}).
		\item\label{rem:sections_coh_m1_2}
		      Let $s_{0}\in \mathcal{S}_{G}$ and let $G$ act continuously on $\FinStepSolvQuo{\EtFundGrpGeom{}}{\abelian}$ by
		      \begin{equation*}
			      g\cdot \delta \coloneq s_{0}(g)\,\delta\,s_{0}(g)^{-1}
			      \qquad
			      (g\in G,\ \delta\in \FinStepSolvQuo{\EtFundGrpGeom{}}{\abelian}).
		      \end{equation*}
		      For $s\in \mathcal{S}_{G}$, define
		      \begin{equation*}
			      c_{s}(g)\coloneq s(g)\,s_{0}(g)^{-1}
			      \in
			      \FinStepSolvQuo{\EtFundGrpGeom{}}{\abelian}
			      \qquad
			      (g\in G).
		      \end{equation*}
		      Hence $c_{s}\in Z^{1}\left(G,\FinStepSolvQuo{\EtFundGrpGeom{}}{\abelian}\right)$, and the assignment $s\mapsto c_{s}$ induces a bijection
		      \begin{equation*}
			      \mathcal{S}_{G}
			      \xrightarrow{\ \sim\ }
			      Z^{1}_{\mathrm{cont}}\left(G,\FinStepSolvQuo{\EtFundGrpGeom{}}{\abelian}\right),
		      \end{equation*}
		      with inverse given by $c\mapsto (g\mapsto c(g)\,s_{0}(g))$.
		      Moreover, the action of $\FinStepSolvQuo{\EtFundGrpGeom{}}{\abelian}$ on $\mathcal{S}_{G}$ by conjugation corresponds to changing $c_{s}$ by a coboundary.
		      Hence we obtain a bijection
		      \begin{equation*}
			      \mathcal{S}_{G}/\FinStepSolvQuo{\EtFundGrpGeom{}}{\abelian}
			      \xrightarrow{\ \sim\ }
			      H^{1}_{\mathrm{cont}}\left(G,\FinStepSolvQuo{\EtFundGrpGeom{}}{\abelian}\right).
		      \end{equation*}
		      Similarly, for any $\UnivCov{x}\in \FinStepSolvQuoGeom{\UnivCov{\Compactification{X}}}{\abelian}$ and any open subgroup $H$ of $\mathrm{pr}(D_{\UnivCov{x}})$, we define
		      \begin{equation*}
			      \mathcal{S}_{H}(\UnivCov{x})
			      \coloneq
			      \left\{
			      s\colon H\to \FinStepSolvQuoGeom{\EtFundGrpPi{}}{\abelian}
			      \ \middle|\
			      \mathrm{pr}\circ s=\iota_{H},\ s(H)\subset D_{\UnivCov{x}}
			      \right\}.
		      \end{equation*}
		      By the same argument as above, we obtain a bijection
		      \begin{equation*}
			      \mathcal{S}_{H}(\UnivCov{x})/I_{\UnivCov{x}}
			      \xrightarrow{\ \sim\ }
			      H^{1}_{\mathrm{cont}}(H,I_{\UnivCov{x}}).
		      \end{equation*}
	\end{enumerate}
\end{remark}

\begin{lemma}\label{gsectionlema01}
	We keep the notation of Notation~\ref{notation_basic1}.
	Let $G$ be an open subgroup of $\AbsGalGrp{k}$.
	Assume that $k$ is perfect and that $X=\mathbb{G}_{m}(=\mathbb{P}^{1}_{k}-\{0,\infty\})$.
	Define
	\begin{equation*}
		j\coloneq j_{G,\FinStepSolvQuoGeom{\EtFundGrpPi{}}{\abelian}}\colon
		\mathcal{S}_{G}\times\mathcal{S}_{G}
		\to
		H^{1}_{\mathrm{cont}}\left(G,\FinStepSolvQuo{\EtFundGrpGeom{}}{\abelian}\right)
	\end{equation*}
	by sending $(s_{1},s_{2})$ to the cohomology class of the continuous $1$-cocycle
	\begin{equation*}
		G\to \FinStepSolvQuo{\EtFundGrpGeom{}}{\abelian},\qquad
		\sigma\mapsto s_{1}(\sigma)s_{2}(\sigma)^{-1}.
	\end{equation*}
	Let $s,s'\colon G\to \FinStepSolvQuoGeom{\EtFundGrpPi{}}{\abelian}$ be geometric sections at closed points $\UnivCov{v},\UnivCov{v}'$ of $\FinStepSolvQuoGeom{\UnivCov{\Compactification{X}}}{\abelian}$ lying over $v,v'\in \mathbb{G}_{m}(\FieldAlgeClosure{k}^{G})$, respectively.
	Then the image of $v/v'\in \mathbb{G}_{m}(\FieldAlgeClosure{k}^{G})$ under the Kummer homomorphism
	\begin{equation}\label{kummerg}
		\mathbb{G}_{m}(\FieldAlgeClosure{k}^{G}) \to \varprojlim_{p\nmid N} \mathbb{G}_{m}(\FieldAlgeClosure{k}^{G})/\mathbb{G}_{m}(\FieldAlgeClosure{k}^{G})^{N}\to H^{1}_{\mathrm{cont}}(G,T_{p'}(\mathbb{G}_{m}))
	\end{equation}
	coincides with $j(s,s')$.
\end{lemma}

\begin{proof}
	Let $N$ be a positive integer prime to $p$.
	Consider the finite \'etale Galois covering
	\begin{equation*}
		[N]:X
		=
		\mathbb{G}_{m}\to X
		=
		\mathbb{G}_{m},\qquad x\mapsto x^{N},
	\end{equation*}
	whose Galois group is $\mu_{N}$.
	Choose $N$-th roots $\sqrt[N]{v},\sqrt[N]{v'}\in \mathbb{G}_{m}(\FieldAlgeClosure{k})$ of $v,v'\in \mathbb{G}_{m}(\FieldAlgeClosure{k}^{G})$, respectively.
	By Remark~\ref{rem:sections_coh_m1}\ref{rem:sections_coh_m1_1}, the geometric sections $s,s'$ are induced by decomposition groups at suitable lifts $\UnivCov{v},\UnivCov{v}'$ lying over $v,v'$.
	Under the surjection $\FinStepSolvQuoGeom{\EtFundGrpPi{}}{\abelian}\twoheadrightarrow \mu_{N}$, the element $s(\sigma)\in \FinStepSolvQuoGeom{\EtFundGrpPi{}}{\abelian}$ acts on the fiber of $[N]$ over $v$; hence its image in $\mu_{N}$ is the unique element $\zeta_{\sigma}\in \mu_{N}$ such that
	\begin{equation*}
		\sigma(\sqrt[N]{v})
		=
		\zeta_{\sigma}\cdot \sqrt[N]{v}.
	\end{equation*}
	In other words, the image of $s(\sigma)$ in $\mu_{N}$ is
	\begin{equation*}
		\zeta_{\sigma}
		=
		\frac{\sigma(\sqrt[N]{v})}{\sqrt[N]{v}}\in \mu_{N}.
	\end{equation*}
	Similarly, the image of $s'(\sigma)$ in $\mu_{N}$ is $\sigma(\sqrt[N]{v'})/\sqrt[N]{v'}$.
	Therefore, the image in $\mu_{N}$ of the cocycle $\sigma\mapsto s(\sigma)s'(\sigma)^{-1}\in \FinStepSolvQuo{\EtFundGrpGeom{}}{\abelian}$ is the $\mu_{N}$-valued cocycle
	\begin{equation*}
		\sigma
		\mapsto
		\frac{\sigma(\sqrt[N]{v})}{\sqrt[N]{v}}\cdot
		\left(\frac{\sigma(\sqrt[N]{v'})}{\sqrt[N]{v'}}\right)^{-1}
		=
		\frac{\sigma(\sqrt[N]{v/v'})}{\sqrt[N]{v/v'}}.
	\end{equation*}
	This is exactly the cocycle representing the $N$-Kummer class of $v/v'$ in $H^{1}(G,\mu_{N})$.
	Hence, for every $N$ prime to $p$, the images of $j(s,s')$ and of $v/v'$ in $H^{1}(G,\mu_{N})$ coincide.
	Passing to the projective limit over such $N$, we obtain that the image of $v/v'$ under \eqref{kummerg} coincides with $j(s,s')$.
\end{proof}

\begin{lemma}\label{seppropr=2_lem}
	We keep the notation of Notation~\ref{notation_basic1}.
	Let $\UnivCov{v},\UnivCov{v}'\in \FinStepSolvQuoGeom{\UnivCov{\Compactification{X}}}{\abelian}$ be closed points lying over the same point $v\in \Compactification{X}_{\FieldAlgeClosure{k}}$.
	Assume that $k$ is perfect and that $r\geq 2$.
	If the image of $D_{\UnivCov{v}}\cap D_{\UnivCov{v}'}$ in $\AbsGalGrp{k}$ is open, then $\UnivCov{v}=\UnivCov{v}'$.
\end{lemma}

\begin{proof}
	Replacing $k$ by a finite extension, we may assume that $\mathrm{pr}(D_{\UnivCov{v}}\cap D_{\UnivCov{v}'})=\AbsGalGrp{k}$ and that $E(\FieldAlgeClosure{k})=E$.
	Hence $v$ is a $k$-rational point.
	We first claim that
	\begin{equation}\label{eq:H0-vanishing-sepprop}
		\left(\FinStepSolvQuo{\EtFundGrpGeom{}}{\abelian}/I_{v}
		\right)^{\AbsGalGrp{k}}
		=
		\{1\}.
	\end{equation}
	Indeed, since $E(\FieldAlgeClosure{k})=E$, the exact sequence \eqref{wfseq} implies that the $\ell$-primary part of $\FinStepSolvQuo{\EtFundGrpGeom{}}{\abelian}/I_{\UnivCov{v}'}$ is a quotient of a finite direct sum of copies of $\mathbb{Z}_{\ell}(1)$ for every prime $\ell\neq p$.
	On the other hand, since $k$ is finitely generated over the prime field, the group of $\ell$-power roots of unity in $k$ is finite, hence $(\mathbb{Z}_{\ell}(1))^{\AbsGalGrp{k}}=0$.
	This proves \eqref{eq:H0-vanishing-sepprop}.

	Since $\UnivCov{v}$ and $\UnivCov{v}'$ lie over the same point $v$, there exists $\gamma\in \FinStepSolvQuo{\EtFundGrpGeom{}}{\abelian}$ such that $\UnivCov{v}'=\gamma\UnivCov{v}$.
	Therefore $D_{\UnivCov{v}'}=\gamma D_{\UnivCov{v}}\gamma^{-1}$.
	Let $t\in \AbsGalGrp{k}$ and choose a lift $\tilde{t}\in D_{\UnivCov{v}}\cap D_{\UnivCov{v}'}$ of $t$.
	Since $\gamma\tilde{t}\gamma^{-1}\in D_{\UnivCov{v}'}$ and $\tilde{t}\in D_{\UnivCov{v}'}$, we have
	\begin{equation*}
		\tilde{t}^{-1}\gamma\tilde{t}\gamma^{-1}
		\in
		D_{\UnivCov{v}'}\cap \FinStepSolvQuo{\EtFundGrpGeom{}}{\abelian}
		=
		I_{\UnivCov{v}'}.
	\end{equation*}
	Thus the image $\overline{\gamma}\in \FinStepSolvQuo{\EtFundGrpGeom{}}{\abelian}/I_{\UnivCov{v}'}$ is fixed by $t$.
	Hence
	\begin{equation*}
		\overline{\gamma}
		\in
		\left(\FinStepSolvQuo{\EtFundGrpGeom{}}{\abelian}/I_{\UnivCov{v}'}\right)^{\AbsGalGrp{k}}.
	\end{equation*}
	Therefore $\overline{\gamma}=1$; equivalently, $\gamma\in I_{\UnivCov{v}'}$.
	As $I_{\UnivCov{v}'}$ fixes $\UnivCov{v}'$, we obtain $\UnivCov{v}=\gamma^{-1}\UnivCov{v}'=\UnivCov{v}'$.
\end{proof}

\begin{proposition}\label{sepprop}
	We keep the notation of Notation~\ref{notation_basic1}.
	Let $\UnivCov{v},\UnivCov{v}'\in\FinStepSolvQuoGeom{\UnivCov{\Compactification{X}}}{\abelian}$ be closed points.
	Assume that $k$ is perfect.
	Assume that one of the following conditions holds:
	\begin{enumerate}[(i)]
		\item
		      $r=2$ and $\{\UnivCov{v},\UnivCov{v}'\}\cap\FinStepSolvQuoGeom{\UnivCov{X}}{\abelian}\neq\emptyset$.
		\item
		      $r\geq 3$.
	\end{enumerate}
	Then the following conditions are equivalent:
	\begin{enumerate}[(a)]
		\item\label{sepprop-a}
		      $\UnivCov{v}=\UnivCov{v}'$;
		\item\label{sepprop-b}
		      $D_{\UnivCov{v}}=D_{\UnivCov{v}'}$;
		\item\label{sepprop-c}
		      $D_{\UnivCov{v}}$ and $D_{\UnivCov{v}'}$ are commensurable.
	\end{enumerate}
\end{proposition}

Note that the implication \ref{sepprop-c}$\Rightarrow$\ref{sepprop-a} does not hold when $r=2$ and $\UnivCov{v},\UnivCov{v}'\notin\FinStepSolvQuoGeom{\UnivCov{X}}{\abelian}$, since the decomposition groups $D_{\UnivCov{v}}$ and $D_{\UnivCov{v}'}$ are both open subgroups of $\FinStepSolvQuoGeom{\EtFundGrpPi{}}{\abelian}$.

\begin{proof}
	The implications \ref{sepprop-a}$\Rightarrow$\ref{sepprop-b}$\Rightarrow$\ref{sepprop-c} are immediate.
	We show \ref{sepprop-c}$\Rightarrow$\ref{sepprop-a}.
	Assume condition~\ref{sepprop-c}.

	We first consider the case $r=2$.
	When $\UnivCov{v}\neq\UnivCov{v}'$ and $\UnivCov{v}'\in\FinStepSolvQuoGeom{\UnivCov{E}}{\abelian}$, then $\UnivCov{v}\in\FinStepSolvQuoGeom{\UnivCov{X}}{\abelian}$ and the cuspidal decomposition group $D_{\UnivCov{v}'}$ is open in $\FinStepSolvQuoGeom{\EtFundGrpPi{}}{\abelian}$.
	On the other hand, the non-cuspidal decomposition group $D_{\UnivCov{v}}$ is not open in $\FinStepSolvQuoGeom{\EtFundGrpPi{}}{\abelian}$, since $D_{\UnivCov{v}}\cap \FinStepSolvQuoGeom{\EtFundGrpGeom{}}{\abelian}=1$.
	Therefore, the condition \ref{sepprop-c} does not hold.
	Hence we may assume that $\UnivCov{v},\UnivCov{v}'\in\FinStepSolvQuoGeom{\UnivCov{X}}{\abelian}$.
	Replacing $k$ by a finite extension, we may assume that $\mathrm{pr}(D_{\UnivCov{v}}\cap D_{\UnivCov{v}'})=\AbsGalGrp{k}$ and that $E(\FieldAlgeClosure{k})=E$.
	Let $v,v'\in \Compactification{X}_{\FieldAlgeClosure{k}}(k)$ be the images of $\UnivCov{v},\UnivCov{v}'$, respectively.
	After replacing $\Compactification{X}$ by an isomorphic curve, we may choose a coordinate on $\Compactification{X}\simeq \mathbb{P}^{1}_{k}$ such that $v,v'\in X\subset \mathbb{G}_{m}=\mathbb{P}^{1}_{k}-\{0,\infty\}$.
	Since $D_{\UnivCov{v}}\cap D_{\UnivCov{v}'}\twoheadrightarrow \AbsGalGrp{k}$, the two geometric sections $s_{\UnivCov{v}}$ and $s_{\UnivCov{v}'}$ coincide.
	Thus, the class $j(s_{\UnivCov{v}},s_{\UnivCov{v}'})$ is trivial.
	By Lemma~\ref{gsectionlema01}, this implies that the Kummer image of $v/v'\in \mathbb{G}_{m}(k)$ is trivial.
	By the injectivity of the Kummer homomorphism \eqref{kummerg}, we obtain $v/v'=1$; equivalently, $v=v'$.
	Since $D_{\UnivCov{v}}\cap D_{\UnivCov{v}'}$ has finite index in $D_{\UnivCov{v}}$, its image in $\AbsGalGrp{k}$ is open.
	Thus, by Lemma~\ref{seppropr=2_lem}, we obtain $\UnivCov{v}=\UnivCov{v}'$.
	This completes the proof of the case of $r=2$.

	Next, we consider the case $r\geq 3$.
	In this case, the inertia subgroups $I_{\UnivCov{v}}$ and $I_{\UnivCov{v}'}$ are also commensurable.
	Thus, if $\UnivCov{v}$ and $\UnivCov{v}'$ are both cuspidal, then the images of $\UnivCov{v}$ and $\UnivCov{v}'$ in $E_{\FieldAlgeClosure{k}}$ coincide by the separatedness property of inertia subgroups (see~\cite[Lemma~1.11(1)]{MR4745885}).
	Therefore, condition~\ref{sepprop-a} holds.
	Thus, we may assume that $\UnivCov{v}, \UnivCov{v}' \in \FinStepSolvQuoGeom{\UnivCov{X}}{\abelian}$.
	After replacing $\Compactification{X}$ by an isomorphic curve, we may choose a coordinate on $\Compactification{X}\simeq \mathbb{P}^{1}_{k}$ such that $X\subset \mathbb{G}_{m}=\mathbb{P}^{1}_{k}-\{0,\infty\}$ and $v\in \mathbb{G}_{m}$.
	Since the commensurability is preserved by the morphism
	\begin{equation*}
		\FinStepSolvQuoGeom{\EtFundGrpPi{X}}{\abelian}
		\to
		\FinStepSolvQuoGeom{\EtFundGrpPi{\mathbb{G}_{m}}}{\abelian}
	\end{equation*}
	induced by the open immersion, the case of $r=2$ implies $v=v'$.
	Since $D_{\UnivCov{v}}\cap D_{\UnivCov{v}'}$ has finite index in $D_{\UnivCov{v}}$, its image in $\AbsGalGrp{k}$ is open.
	Thus, by Lemma~\ref{seppropr=2_lem}, we obtain $\UnivCov{v}=\UnivCov{v}'$.
	This completes the proof.
\end{proof}

\begin{remark}\label{rem1.8}
	We keep the notation of Notation~\ref{notation_basic1}.
	Assume that $k$ is perfect and that $r\geq 3$ (resp.\ $r=2$).
	An important consequence of Proposition~\ref{sepprop}\ref{sepprop-a}$\Leftrightarrow$\ref{sepprop-c} is that the following natural map is bijective:
	\begin{equation*}
		(\FinStepSolvQuoGeom{\UnivCov{\Compactification{X}}}{\abelian})^{\closed}
		\to
		\mathrm{Dec}\left(\FinStepSolvQuoGeom{\EtFundGrpPi{}}{\abelian}\right)
		\qquad
		(
		\text{resp. }
			(\FinStepSolvQuoGeom{\UnivCov{X}}{\abelian})^{\closed}
		\to
		\mathrm{Dec}^{\noncuspidal}\left(\FinStepSolvQuoGeom{\EtFundGrpPi{}}{\abelian}\right)
		).
	\end{equation*}
	In particular, once $\mathrm{Dec}\left(\FinStepSolvQuoGeom{\EtFundGrpPi{}}{\abelian}\right)$ is reconstructed group-theoretically from $\FinStepSolvQuoGeom{\EtFundGrpPi{}}{\metabelian}$, this bijectivity allows us to reconstruct $\FinStepSolvQuoGeom{\UnivCov{\Compactification{X}}}{\abelian}$ itself group-theoretically from $\FinStepSolvQuoGeom{\EtFundGrpPi{}}{\metabelian}$.
	This strategy was used in \cite{MR1478817} in the classical setting (and in \cite{MR4745885} in the metabelian setting).
\end{remark}

\begin{definition}\label{def:norm_comm}
	Let $G$ be a topological group, and let $H$ be a closed subgroup of $G$.
	We define the \emph{commensurator} of $H$ in $G$ by
	\begin{equation*}
		\Commensurator{G}{H}\coloneq \{x\in G\mid xHx^{-1}\text{ and }H\text{ are commensurable}\}.
	\end{equation*}
\end{definition}

\begin{corollary}\label{cor2.6comm}
	We keep the notation of Notation~\ref{notation_basic1}.
	Let $\UnivCov{v}\in \FinStepSolvQuoGeom{\UnivCov{\Compactification{X}}}{\abelian}$ be a closed point.
	Assume that $k$ is perfect and that $r\geq 3$.
	Then for any open subgroup $H$ of $D_{\UnivCov{v}}$, we have
	\begin{equation*}
		D_{\UnivCov{v}}
		=
		\Commensurator{\FinStepSolvQuoGeom{\EtFundGrpPi{}}{\abelian}}{H}.
	\end{equation*}
\end{corollary}

\begin{proof}
	Since $H$ is an open subgroup of $D_{\UnivCov{v}}$, we have $D_{\UnivCov{v}}\subset \Commensurator{\FinStepSolvQuoGeom{\EtFundGrpPi{}}{\abelian}}{H}$.
	Let $c\in \Commensurator{\FinStepSolvQuoGeom{\EtFundGrpPi{}}{\abelian}}{H}$ be an element.
	Since $H$ and $cHc^{-1}$ are commensurable by hypothesis and $H$ is open in $D_{\UnivCov{v}}$, it follows that $D_{\UnivCov{v}}$ and $c\cdot D_{\UnivCov{v}}\cdot c^{-1}(=D_{c\UnivCov{v}})$ are commensurable.
	By Proposition~\ref{sepprop}\ref{sepprop-a}$\Leftrightarrow$\ref{sepprop-c}, this implies $c\cdot \UnivCov{v}=\UnivCov{v}$.
	Therefore $c\in D_{\UnivCov{v}}$, which proves the reverse inclusion.
\end{proof}

Next, we introduce quasi-sections and show a separatedness property for them.

\begin{definition}\label{def:quasi-sections}
	We keep the notation of Notation~\ref{notation_basic1}.
	We define the $\FinStepSolvQuoGeom{\EtFundGrpPi{}}{\abelian}$-set
	\begin{equation*}
		\mathcal{QS}
		\coloneq
		\varinjlim_{G\subset \AbsGalGrp{k}: \mathrm{open}}
		\mathcal{S}_{G},
	\end{equation*}
	where the transition maps are the restrictions $s\mapsto s|_{G'}$.
	We call an element of $\mathcal{QS}$ a \emph{quasi-section}.
	Thus, a quasi-section $\mathfrak{s}$ is represented by a pair $(G,s)$ with $G$ an open subgroup of $\AbsGalGrp{k}$ and $s\in \mathcal{S}_{G}$.
	If there is no risk of confusion, we write
	\begin{equation*}
		(G,s)
		\in
		\mathfrak{s}
	\end{equation*}
	for such a representative.
	A quasi-section $\mathfrak{s}$ is \emph{geometric} if there exists a representative $(G,s)$ such that $s$ is geometric.
	When a representative $s$ is at $\UnivCov{v}\in\FinStepSolvQuoGeom{\UnivCov{\Compactification{X}}}{\abelian}$, we say that $\mathfrak{s}$ is \emph{at} $\UnivCov{v}$ (or \emph{at} its image $v\in \Compactification{X}$).
	If such a representative $s$ is non-cuspidal (resp.\ cuspidal), then we say that $\mathfrak{s}$ is \emph{non-cuspidal} (resp.\ \emph{cuspidal}).
	We write
	\begin{equation*}
		\mathcal{QS}^{\geometric,\noncuspidal},
		\qquad
		\mathcal{QS}^{\geometric,\cuspidal},
		\qquad
		\mathcal{QS}^{\geometric}
	\end{equation*}
	for the sets of non-cuspidal geometric, cuspidal geometric, and geometric quasi-sections, respectively.
\end{definition}

\begin{lemma}\label{sepprop_e}
	We keep the notation of Notation~\ref{notation_basic1}.
	Let $\UnivCov{v},\UnivCov{v}'$ be closed points of $\FinStepSolvQuoGeom{\UnivCov{\Compactification{X}}}{\abelian}$.
	Assume that $k$ is perfect and that the following condition holds:
	\begin{equation*}
		r\geq \CardinalityOfSet{\left(\{\UnivCov{v},\UnivCov{v}'\}\cap \FinStepSolvQuoGeom{\UnivCov{E}}{\abelian}\right)}+2.
	\end{equation*}
	Then the conditions \ref{sepprop-a}--\ref{sepprop-c} in Proposition~\ref{sepprop} are equivalent to the following condition:
	\begin{enumerate}[(a), start=4]
		\item\label{sepprop-e} The image of $D_{\UnivCov{v}}\cap D_{\UnivCov{v}'}$ in $\AbsGalGrp{k}$ is open.
	\end{enumerate}
\end{lemma}

\begin{proof}
	The implication \ref{sepprop-c}$\Rightarrow$\ref{sepprop-e} is immediate.
	Replacing $k$ by a finite extension, we may assume that $\mathrm{pr}(D_{\UnivCov{v}}\cap D_{\UnivCov{v}'})=\AbsGalGrp{k}$ and that $E(\FieldAlgeClosure{k})=E$.
	Let $v,v'\in \Compactification{X}_{\FieldAlgeClosure{k}}$ be the images of $\UnivCov{v},\UnivCov{v}'$, respectively.
	Hence $v$ and $v'$ are $k$-rational.
	After replacing $\Compactification{X}$ by an isomorphic curve, we may choose a coordinate on $\Compactification{X}\simeq \mathbb{P}^{1}_{k}$ such that $X\subset \mathbb{G}_{m}=\mathbb{P}^{1}_{k}-\{0,\infty\}$ and $v,v'\in \mathbb{G}_{m}$.
	(Here, we use the hypotheses in the statement.)
	Let
	\begin{equation*}
		\FinStepSolvQuoGeom{\EtFundGrpPi{X}}{\abelian}\to
		\FinStepSolvQuoGeom{\EtFundGrpPi{\mathbb{G}_{m}}}{\abelian}
	\end{equation*}
	be the morphism induced by the open immersion.
	Push forward the geometric sections attached to $\UnivCov{v}$ and $\UnivCov{v}'$ to obtain geometric sections for $\mathbb{G}_{m}$ at $v,v'\in \mathbb{G}_{m}(k)$.
	Since $D_{\UnivCov{v}}\cap D_{\UnivCov{v}'}\twoheadrightarrow \AbsGalGrp{k}$, the two sections coincide on $\AbsGalGrp{k}$.
	Thus, the class $j(s_{\UnivCov{v}},s_{\UnivCov{v}'})$ is trivial.
	By Lemma~\ref{gsectionlema01}, this implies that the Kummer image of $v/v'\in \mathbb{G}_{m}(k)$ is trivial.
	By the injectivity of the Kummer homomorphism \eqref{kummerg}, we obtain $v/v'=1$; equivalently, $v=v'$.
	Thus, by Lemma~\ref{seppropr=2_lem}, we obtain $\UnivCov{v}=\UnivCov{v}'$.
	This completes the proof.
\end{proof}

\begin{proposition}\label{sepprop_quasisection}
	We keep the notation of Notation~\ref{notation_basic1}.
	Assume that $k$ is perfect.
	Then the following hold:
	\begin{enumerate}[(1)]
		\item \label{sepprop_quasisection_1}
		      Assume that $r\geq 2$.
		      Then every non-cuspidal geometric quasi-section $\mathfrak{s}\in \mathcal{QS}^{\geometric,\noncuspidal}$ is at a unique closed point $\UnivCov{x}_{\mathfrak{s}}\in (\FinStepSolvQuoGeom{\UnivCov{X}}{\abelian})^{\closed}$.
		      In particular, the map
		      \begin{equation}\label{qsection-nc-point-map}
			      \UnivCov{x}_{\ast}\colon
			      \mathcal{QS}^{\geometric,\noncuspidal}
			      \to
			      (\FinStepSolvQuoGeom{\UnivCov{X}}{\abelian})^{\closed},
			      \qquad
			      \mathfrak{s}\mapsto \UnivCov{x}_{\mathfrak{s}}
		      \end{equation}
		      is a bijection.
		\item \label{sepprop_quasisection_2}
		      Assume that $r\geq 3$.
		      Then
		      \begin{equation*}
			      \mathcal{QS}^{\geometric,\noncuspidal}
			      \cap
			      \mathcal{QS}^{\geometric,\cuspidal}
			      =
			      \emptyset.
		      \end{equation*}
		      In other words, for a geometric quasi-section, the property of being cuspidal or non-cuspidal is intrinsic.

		\item \label{sepprop_quasisection_3}
		      Assume that $r\geq 4$.
		      Then every geometric quasi-section $\mathfrak{s}\in \mathcal{QS}^{\geometric}$ is at a unique closed point $\UnivCov{x}_{\mathfrak{s}}\in (\FinStepSolvQuoGeom{\UnivCov{\Compactification{X}}}{\abelian})^{\closed}$.
		      In particular, the surjection
		      \begin{equation}\label{qsection-point-map}
			      \UnivCov{x}_{\ast}\colon
			      \mathcal{QS}^{\geometric}
			      \to
			      (\FinStepSolvQuoGeom{\UnivCov{\Compactification{X}}}{\abelian})^{\closed},
			      \qquad
			      \mathfrak{s}\mapsto \UnivCov{x}_{\mathfrak{s}}
		      \end{equation}
		      is well-defined and extends the bijection \eqref{qsection-nc-point-map}.
	\end{enumerate}
\end{proposition}

\begin{proof}
	\noindent\ref{sepprop_quasisection_1}
	Let $\mathfrak{s}\in \mathcal{QS}^{\geometric,\noncuspidal}$.
	Assume that $\mathfrak{s}$ is at closed points $\UnivCov{u},\UnivCov{u}'\in \FinStepSolvQuoGeom{\UnivCov{X}}{\abelian}$.
	Hence there exist representatives $(G,s), (G',s')\in\mathfrak{s}$ with $s(G)\subset D_{\UnivCov{u}}$ and $s'(G')\subset D_{\UnivCov{u}'}$.
	Since they represent the same quasi-section, after replacing them by their restrictions to a suitable open subgroup of $G\cap G'$, we may assume that $(G,s)=(G',s')$ and that $s(G)\subset D_{\UnivCov{u}}\cap D_{\UnivCov{u}'}$.
	Since $\mathrm{pr}(s(G))=G$ is open in $\AbsGalGrp{k}$, the image of $D_{\UnivCov{u}}\cap D_{\UnivCov{u}'}$ in $\AbsGalGrp{k}$ is open.
	Hence Lemma~\ref{sepprop_e} implies $\UnivCov{u}=\UnivCov{u}'$.
	(Here, we use the assumption $r\geq 2$.)
	This proves the uniqueness of $\UnivCov{x}_{\mathfrak{s}}$.
	Hence the map~\eqref{qsection-nc-point-map} is well-defined.
	Since $\mathrm{pr}$ is injective on each non-cuspidal decomposition group, the injectivity of the map \eqref{qsection-nc-point-map} follows.
	The surjectivity of the map \eqref{qsection-nc-point-map} follows from Remark~\ref{rem:sections_coh_m1}\ref{rem:sections_coh_m1_1}.
	This completes the proof.

	\noindent\ref{sepprop_quasisection_2}
	Assume that there exists $\mathfrak{s}\in \mathcal{QS}^{\geometric,\noncuspidal}\cap \mathcal{QS}^{\geometric,\cuspidal}$.
	Hence we may choose a representative $(G,s)\in\mathfrak{s}$, a closed point $\UnivCov{u}\in \FinStepSolvQuoGeom{\UnivCov{X}}{\abelian}$, and a closed point $\UnivCov{e}\in \FinStepSolvQuoGeom{\UnivCov{E}}{\abelian}$ such that $s(G)\subset D_{\UnivCov{u}}\cap D_{\UnivCov{e}}$.
	Since $\mathrm{pr}(s(G))=G$ is open in $\AbsGalGrp{k}$, the image of $D_{\UnivCov{u}}\cap D_{\UnivCov{e}}$ in $\AbsGalGrp{k}$ is open.
	Hence Lemma~\ref{sepprop_e} implies $\UnivCov{u}=\UnivCov{e}$, which is impossible.
	(Here, we use the assumption $r\geq 3$.)
	This completes the proof.

	\noindent\ref{sepprop_quasisection_3}
	Let $\mathfrak{s}\in \mathcal{QS}^{\geometric}$.
	Assume that $\mathfrak{s}$ is at closed points $\UnivCov{x},\UnivCov{x}'\in \FinStepSolvQuoGeom{\UnivCov{\Compactification{X}}}{\abelian}$.
	Hence there exist representatives $(G,s), (G',s')\in\mathfrak{s}$ with $s(G)\subset D_{\UnivCov{x}}$ and $s'(G')\subset D_{\UnivCov{x}'}$.
	Since they represent the same quasi-section, after replacing them by their restrictions to a suitable open subgroup of $G\cap G'$, we may assume that $(G,s)=(G',s')$ and that $s(G)\subset D_{\UnivCov{x}}\cap D_{\UnivCov{x}'}$.
	Since $\mathrm{pr}(s(G))=G$ is open in $\AbsGalGrp{k}$, the image of $D_{\UnivCov{x}}\cap D_{\UnivCov{x}'}$ in $\AbsGalGrp{k}$ is open.
	Hence Lemma~\ref{sepprop_e} implies $\UnivCov{x}=\UnivCov{x}'$.
	(Here, we use the assumption $r\geq 4$.)
	Thus $\UnivCov{x}_{\mathfrak{s}}$ is well-defined.
	The surjectivity of the map \eqref{qsection-point-map} follows from Remark~\ref{rem:sections_coh_m1}\ref{rem:sections_coh_m1_1}.
	This completes the proof.
	The last assertion is immediate from~\ref{sepprop_quasisection_1}.
\end{proof}

\begin{notation}
	Assume that $k$ is perfect and that $r\geq 4$.
	For any $\mathfrak{s}\in \mathcal{QS}^{\geometric}$, we define $D_{\mathfrak{s}}\coloneq D_{\UnivCov{x}_{\mathfrak{s}}}$, $I_{\mathfrak{s}}\coloneq I_{\UnivCov{x}_{\mathfrak{s}}}$, and let $x_{\mathfrak{s}}\in \Compactification{X}_{\FieldAlgeClosure{k}}$ be the image of $\UnivCov{x}_{\mathfrak{s}}$.
\end{notation}

\section{A group-theoretical reconstruction of decomposition groups over finite fields}\label{subsectionreco}

In this section, we reconstruct decomposition groups group-theoretically when $k$ is finite (see Proposition~\ref{bidecoabel} below).

\begin{definition}\label{def:two-cusp-quotient}
	We keep the notation of Notation~\ref{notation_basic1}.
	Let $\mathcal{I}$ be a two-element subset of $\mathrm{Iner}^{\neq\{1\}}(\FinStepSolvQuoGeom{\EtFundGrpPi{}}{\abelian})$.
	Define the closed normal subgroup $N_{\mathcal{I}}$ of $\FinStepSolvQuoGeom{\EtFundGrpPi{}}{\abelian}$ to be the subgroup topologically generated by all inertia groups other than those in $\mathcal{I}$.
	We write
	\begin{equation*}
		\rho_{\mathcal{I}}\colon
		\FinStepSolvQuoGeom{\EtFundGrpPi{}}{\abelian}
		\twoheadrightarrow
		\FinStepSolvQuoGeom{\EtFundGrpPi{}}{\abelian}\big/N_{\mathcal{I}}
	\end{equation*}
	for the natural projection.
	We also define a map
	\begin{equation*}
		\rho_{\mathcal{I}}^{\mathrm{qs}}\colon \mathcal{QS}
		\to
		\mathcal{QS}_{\FinStepSolvQuoGeom{\EtFundGrpPi{}}{\abelian}/N_{\mathcal{I}}}
	\end{equation*}
	by declaring that, if $(G,s)\in\mathfrak{s}$, then $\rho_{\mathcal{I}}^{\mathrm{qs}}(\mathfrak{s})$ is represented by $(G,\rho_{\mathcal{I}}\circ s)$.
\end{definition}

\begin{definition}\label{decorel}
	We keep the notation of Notation~\ref{notation_basic1}.
	Assume that $r\geq 3$.
	For $\mathfrak{s},\mathfrak{s}'\in \mathcal{QS}^{\geometric}$, we write $\mathfrak{s}\mathrm{R}\mathfrak{s}'$ if there exists $I\in \mathrm{Iner}^{\neq\{1\}}\left(\FinStepSolvQuoGeom{\EtFundGrpPi{}}{\abelian}\right)$ such that, for every two-element subset
	\begin{equation*}
		\mathcal{I}\subset
		\mathrm{Iner}^{\neq\{1\}}\left(\FinStepSolvQuoGeom{\EtFundGrpPi{}}{\abelian}\right)\setminus\{I\},
	\end{equation*}
	we have $\rho^{\mathrm{qs}}_{\mathcal{I}}(\mathfrak{s})=a\cdot\rho^{\mathrm{qs}}_{\mathcal{I}}(\mathfrak{s}')$ as elements of $\mathcal{QS}_{\FinStepSolvQuoGeom{\EtFundGrpPi{}}{\abelian}/N_{\mathcal{I}}}$ for some $a\in\FinStepSolvQuoGeom{\EtFundGrpPi{}}{\abelian}/N_{\mathcal{I}}$.
	The binary relation $\mathrm{R}$ is reflexive and symmetric by definition.
	We do not claim a priori that it is transitive.
\end{definition}

\begin{lemma}\label{decorellem}
	We keep the notation of Notation~\ref{notation_basic1}.
	Let $\mathfrak{s},\mathfrak{s}'\in \mathcal{QS}^{\geometric}$ be geometric quasi-sections at $x_{\mathfrak{s}},x_{\mathfrak{s}'}\in (\Compactification{X}_{\FieldAlgeClosure{k}})^{\closed}$, respectively.
	Assume that $k$ is perfect and that $r\geq 3$.
	Then
	\begin{equation*}
		x_{\mathfrak{s}}=x_{\mathfrak{s}'}
		\Rightarrow
		\mathfrak{s}\mathrm{R}\mathfrak{s}'
	\end{equation*}
	holds.
	If, in addition, we assume that
	\begin{equation*}
		r\geq \CardinalityOfSet{\left(\{x_{\mathfrak{s}},x_{\mathfrak{s}'}\}\cap E_{\FieldAlgeClosure{k}}\right)}+3,
	\end{equation*}
	then
	\begin{equation*}
		x_{\mathfrak{s}}=x_{\mathfrak{s}'}
		\Leftrightarrow
		\mathfrak{s}\mathrm{R}\mathfrak{s}'
	\end{equation*}
	holds.
	In particular, if $r\geq 5$, then the binary relation $\mathrm{R}$ is an equivalence relation on $\mathcal{QS}^{\geometric}$, and the natural map
	\begin{equation*}
		\mathcal{QS}^{\geometric}/\mathrm{R} \to (\Compactification{X}_{\FieldAlgeClosure{k}})^{\closed},
		\qquad
		[\mathfrak{s}]\mapsto x_{\mathfrak{s}}
	\end{equation*}
	is a bijection.
\end{lemma}

\begin{proof}
	Assume first that $x_{\mathfrak{s}}=x_{\mathfrak{s}'}$.
	Hence $\mathfrak{s}$ is cuspidal if and only if $\mathfrak{s}'$ is cuspidal.
	If both are cuspidal, then we set $I\coloneq I_{\mathfrak{s}}=I_{\mathfrak{s}'}$.
	If both are non-cuspidal, then we choose an arbitrary element $I\in \mathrm{Iner}^{\neq\{1\}}(\FinStepSolvQuoGeom{\EtFundGrpPi{}}{\abelian})$.
	Let $\mathcal{I}$ be a subset of $\mathrm{Iner}^{\neq\{1\}}(\FinStepSolvQuoGeom{\EtFundGrpPi{}}{\abelian})\setminus\{I\}$ with $\CardinalityOfSet{\mathcal{I}}=2$.
	Hence the common image of $x_{\mathfrak{s}}=x_{\mathfrak{s}'}$ in the corresponding two-cusp quotient is non-cuspidal.
	By Proposition~\ref{sepprop_quasisection}\ref{sepprop_quasisection_1}, non-cuspidal geometric quasi-sections of this quotient are identified, up to the action of $\FinStepSolvQuoGeom{\EtFundGrpPi{}}{\abelian}/N_{\mathcal{I}}$, with the closed points of the corresponding covering over the two-cusp curve.
	Hence $\rho_{\mathcal{I}}^{\mathrm{qs}}(\mathfrak{s})$ and $\rho_{\mathcal{I}}^{\mathrm{qs}}(\mathfrak{s}')$ lie in the same $\FinStepSolvQuoGeom{\EtFundGrpPi{}}{\abelian}/N_{\mathcal{I}}$-orbit.
	Since this holds for every such $\mathcal{I}$, we obtain $\mathfrak{s}\mathrm{R}\mathfrak{s}'$.

	Assume next that $\mathfrak{s}\mathrm{R}\mathfrak{s}'$.
	Choose $I$ as in Definition~\ref{decorel}.
	By the hypothesis, we may choose a subset $\mathcal{I}\subset \mathrm{Iner}^{\neq\{1\}}(\FinStepSolvQuoGeom{\EtFundGrpPi{}}{\abelian})$ with $\CardinalityOfSet{\mathcal{I}}=2$ such that $\mathcal{I}\cap\{I_{x_{\mathfrak{s}}},I_{x_{\mathfrak{s}'}},I\}=\emptyset$.
	Therefore, the images of $x_{\mathfrak{s}}$ and $x_{\mathfrak{s}'}$ in the corresponding two-cusp quotient are both non-cuspidal.
	On the other hand, by definition of $\mathrm{R}$, the quasi-sections $\rho_{\mathcal{I}}^{\mathrm{qs}}(\mathfrak{s})$ and $\rho_{\mathcal{I}}^{\mathrm{qs}}(\mathfrak{s}')$ lie in the same $\FinStepSolvQuoGeom{\EtFundGrpPi{}}{\abelian}/N_{\mathcal{I}}$-orbit.
	Again by Proposition~\ref{sepprop_quasisection}\ref{sepprop_quasisection_1}, their images in the corresponding two-cusp curve coincide.
	Since both $x_{\mathfrak{s}}$ and $x_{\mathfrak{s}'}$ lie outside the omitted cusps, this implies $x_{\mathfrak{s}}=x_{\mathfrak{s}'}$.
	This completes the proof.
\end{proof}

\begin{lemma}\label{prop:cuspcriterion-sim}
	We keep the notation of Notation~\ref{notation_basic1}.
	Assume that $k$ is perfect and that $r\geq 4$.
	For $\mathfrak{s}\in \mathcal{QS}^{\geometric}$, write
	\begin{equation*}
		[\mathfrak{s}]_{\mathrm{R}}
		\coloneq
		\{\mathfrak{t}\in \mathcal{QS}^{\geometric}\mid \mathfrak{s}\mathrm{R}\mathfrak{t}\}.
	\end{equation*}
	For $\mathfrak{s}\in \mathcal{QS}^{\geometric}$, we have
	\begin{equation*}
		\mathfrak{s}\notin \mathcal{QS}^{\geometric,\cuspidal}
		\iff
		\CardinalityOfSet{\left(\FinStepSolvQuo{\EtFundGrpGeom{}}{\abelian}\backslash[\mathfrak{s}]_{\mathrm{R}}\right)}=1.
	\end{equation*}
	In particular,
	\begin{equation*}
		\mathcal{QS}^{\geometric,\cuspidal}
		=
		\left\{
		\mathfrak{s}\in \mathcal{QS}^{\geometric}
		\ \middle|\
		\CardinalityOfSet{\left(\FinStepSolvQuo{\EtFundGrpGeom{}}{\abelian}\backslash[\mathfrak{s}]_{\mathrm{R}}\right)}>1
		\right\}.
	\end{equation*}
\end{lemma}

\begin{proof}
	We first assume that $\mathfrak{s}$ is non-cuspidal.
	Since $r\geq 4$, Lemma~\ref{decorellem} implies that
	\begin{equation*}
		[\mathfrak{s}]_{\mathrm{R}}
		=
		\{\mathfrak{t}\in \mathcal{QS}^{\geometric}\mid x_{\mathfrak{t}}=x_{\mathfrak{s}}\}.
	\end{equation*}
	On the other hand, Proposition~\ref{sepprop_quasisection}\ref{sepprop_quasisection_1} identifies $\mathcal{QS}^{\geometric,\noncuspidal}$ with $(\FinStepSolvQuoGeom{\UnivCov{X}}{\abelian})^{\closed}$ in a $\FinStepSolvQuo{\EtFundGrpGeom{}}{\abelian}$-equivariant way.
	Under this identification, the above set corresponds to the fiber of the natural map $(\FinStepSolvQuoGeom{\UnivCov{X}}{\abelian})^{\closed}\to (X_{\FieldAlgeClosure{k}})^{\closed}$ over $x_{\mathfrak{s}}$.
	Since $\FinStepSolvQuoGeom{\UnivCov{X}}{\abelian}\to X_{\FieldAlgeClosure{k}}$ is a Galois covering with Galois group $\FinStepSolvQuo{\EtFundGrpGeom{}}{\abelian}$, this fiber is a single $\FinStepSolvQuo{\EtFundGrpGeom{}}{\abelian}$-orbit.
	Hence $\CardinalityOfSet{\left(\FinStepSolvQuo{\EtFundGrpGeom{}}{\abelian}\backslash[\mathfrak{s}]_{\mathrm{R}}\right)}=1$.

	Next, we assume that $\mathfrak{s}$ is cuspidal.
	Let $(G,s)\in \mathfrak{s}$ be a representative with $s(G)\subset D_{\UnivCov{x}_{\mathfrak{s}}}$.
	Replacing $k$ by a finite extension if necessary, we may assume that $G=\AbsGalGrp{k}$ and that $\CardinalityOfSet{k}>2$.
	By Remark~\ref{rem:sections_coh_m1}\ref{rem:sections_coh_m1_2}, the set of sections $\AbsGalGrp{k}\to D_{\UnivCov{x}_{\mathfrak{s}}}$ modulo conjugation by $I_{\UnivCov{x}_{\mathfrak{s}}}$ is naturally identified with $H^{1}(\AbsGalGrp{k},I_{\UnivCov{x}_{\mathfrak{s}}})$.
	Since $I_{\UnivCov{x}_{\mathfrak{s}}}\cong \hat{\mathbb{Z}}^{p'}(1)$ and $k$ is perfect, we have $\CardinalityOfSet{H^{1}(\AbsGalGrp{k},I_{\UnivCov{x}_{\mathfrak{s}}})}>1$.
	Thus there exist two sections $s_{1},s_{2}\colon\AbsGalGrp{k}\to D_{\UnivCov{x}_{\mathfrak{s}}}$ which are not conjugate by $I_{\UnivCov{x}_{\mathfrak{s}}}$.
	Let $\mathfrak{s}_{1},\mathfrak{s}_{2}\in \mathcal{QS}^{\geometric}$ be the geometric quasi-sections represented by $(\AbsGalGrp{k},s_{1})$ and $(\AbsGalGrp{k},s_{2})$, respectively.
	Since $x_{\mathfrak{s}_{1}}=x_{\mathfrak{s}_{2}}=x_{\mathfrak{s}}$ and $r\geq 4$, Lemma~\ref{decorellem} implies that $\mathfrak{s}_{1},\mathfrak{s}_{2}\in [\mathfrak{s}]_{\mathrm{R}}$.
	We claim that $\mathfrak{s}_{1}$ and $\mathfrak{s}_{2}$ do not lie in the same $\FinStepSolvQuo{\EtFundGrpGeom{}}{\abelian}$-orbit.
	Indeed, assume that there exists $\delta\in \FinStepSolvQuo{\EtFundGrpGeom{}}{\abelian}$ such that $\delta\cdot \mathfrak{s}_{1}=\mathfrak{s}_{2}$.
	After restricting to a suitable open subgroup of $\AbsGalGrp{k}$, the images of the corresponding sections are contained in $D_{\delta\UnivCov{x}_{\mathfrak{s}}}\cap D_{\UnivCov{x}_{\mathfrak{s}}}$, and their common image in $\AbsGalGrp{k}$ is open.
	Hence Lemma~\ref{sepprop_e} implies that $\delta\UnivCov{x}_{\mathfrak{s}}=\UnivCov{x}_{\mathfrak{s}}$.
	Therefore $\delta\in D_{\UnivCov{x}_{\mathfrak{s}}}\cap \FinStepSolvQuo{\EtFundGrpGeom{}}{\abelian}=I_{\UnivCov{x}_{\mathfrak{s}}}$.
	This contradicts the choice of $s_{1}$ and $s_{2}$.
	Thus $\CardinalityOfSet{\left(\FinStepSolvQuo{\EtFundGrpGeom{}}{\abelian}\backslash[\mathfrak{s}]_{\mathrm{R}}\right)}>1$.
	This proves the assertion.
\end{proof}

\begin{lemma}\label{lemma:reconstruct-cuspidality-and-cusp-inertia}
	We keep the notation of Notation~\ref{notation_basic1}.
	Assume that $k$ is perfect and that $r\geq 4$.
	For $I\in \mathrm{Iner}^{\neq\{1\}}(\FinStepSolvQuoGeom{\EtFundGrpPi{}}{\abelian})$, we write $\pi_{I}\colon\FinStepSolvQuoGeom{\EtFundGrpPi{}}{\abelian}\twoheadrightarrow\FinStepSolvQuoGeom{\EtFundGrpPi{}}{\abelian}/I$ for the natural projection.
	We also write
	\begin{equation*}
		\pi_{I}^{\mathrm{qs}}\colon
		\mathcal{QS}
		\to
		\mathcal{QS}_{\FinStepSolvQuoGeom{\EtFundGrpPi{}}{\abelian}/I}
	\end{equation*}
	for the induced map.
	Let $\mathfrak{s}\in \mathcal{QS}^{\geometric,\cuspidal}$ be a geometric cuspidal quasi-section.
	Then $\pi_{I}^{\mathrm{qs}}(\mathfrak{s})$ is non-cuspidal if and only if $I=I_{\mathfrak{s}}$.
\end{lemma}

\begin{proof}
	The image of $\UnivCov{x}_{\mathfrak{s}}$ in the quotient corresponding to $\pi_{I_{\mathfrak{s}}}$ is non-cuspidal.
	On the other hand, if $I\neq I_{\mathfrak{s}}$, then the image of $\UnivCov{x}_{\mathfrak{s}}$ in the quotient corresponding to $\pi_{I}$ remains cuspidal.
	Furthermore, the quotient corresponding to $\pi_{I}$ is a curve of type $(0,r-1)$.
	Since $r-1\geq 3$, Proposition~\ref{sepprop_quasisection}\ref{sepprop_quasisection_2} implies that, for the quasi-section $\pi_{I}^{\mathrm{qs}}(\mathfrak{s})$, the property of being cuspidal or non-cuspidal is intrinsic.
	Therefore, the assertion follows.
\end{proof}

\begin{lemma}\label{cor2.13comm}
	We keep the notation of Notation~\ref{notation_basic1}.
	Assume that $k$ is perfect and that $r\geq 4$.
	Let $\UnivCov{x}$ be a closed point of $\FinStepSolvQuoGeom{\UnivCov{\Compactification{X}}}{\abelian}$, and let $x$ be its image in $\Compactification{X}$.
	Then there exists a geometric quasi-section at $\UnivCov{x}$.
	Moreover, for any $\mathfrak{s}\in \mathcal{QS}^{\geometric}$ at $\UnivCov{x}$ and any representative $(G,s)\in \mathfrak{s}$, we have
	\begin{equation*}
		D_{\UnivCov{x}}
		=
		\Commensurator{\FinStepSolvQuoGeom{\EtFundGrpPi{}}{\abelian}}
		{\GeneSubgrpTop{s(G), I_{\mathfrak{s}}}}.
	\end{equation*}
	Note that, by definition, we have $I_{\mathfrak{s}}=I_{\UnivCov{x}}$, which is trivial when $\UnivCov{x}$ is non-cuspidal.
\end{lemma}

\begin{proof}
	We first show that there exists a geometric quasi-section at $\UnivCov{x}$.
	If $\UnivCov{x}$ is non-cuspidal, then $s_{\UnivCov{x}}$ defined by Remark~\ref{rem:sections_coh_m1}\ref{rem:sections_coh_m1_1} is the desired geometric section at $\UnivCov{x}$.
	If $\UnivCov{x}$ is cuspidal, then the decomposition group at $\UnivCov{x}$ fits into an exact sequence
	\begin{equation*}
		1\to I_{\UnivCov{x}}\to D_{\UnivCov{x}}\to\AbsGalGrp{\kappa(x)}\to 1,
	\end{equation*}
	and this surjection admits a continuous section by \cite[Lemma~(2.9)]{MR1478817}.
	Hence, in either case, a geometric quasi-section at $\UnivCov{x}$ exists.

	Next, we show the second assertion.
	Let $\mathfrak{s}\in \mathcal{QS}^{\geometric}$ be at $\UnivCov{x}$, and let $(G,s)\in\mathfrak{s}$ be a representative.
	Since $s(G)\subset D_{\UnivCov{x}}$, we have
	\begin{equation*}
		\GeneSubgrpTop{s(G), I_{\UnivCov{x}}}
		\subset
		D_{\UnivCov{x}}.
	\end{equation*}
	On the other hand, the group $G$ is open in $\AbsGalGrp{\kappa(x)}=\mathrm{pr}(D_{\UnivCov{x}})$, and hence the subgroup $\GeneSubgrpTop{s(G), I_{\UnivCov{x}}}$ is also open in $D_{\UnivCov{x}}$.
	Therefore, Corollary~\ref{cor2.6comm} proves the second assertion.
\end{proof}

\begin{proposition}\label{bidecoabel}
	We keep the notation of Notation~\ref{notation_basic1}.
	Let
	\begin{equation*}
		\Phi\colon
		\FinStepSolvQuoGeom{\EtFundGrpPi{1}}{\metabelian}
		\xrightarrow{\ \sim\ }
		\FinStepSolvQuoGeom{\EtFundGrpPi{2}}{\metabelian}
	\end{equation*}
	be an isomorphism.
	Assume that $k_{1}$ is perfect, that $r_{1}\geq 5$, and that the bijection $\mathcal{QS}_{\FinStepSolvQuoGeom{\EtFundGrpPi{1}}{\abelian}}\xrightarrow{\sim}\mathcal{QS}_{\FinStepSolvQuoGeom{\EtFundGrpPi{2}}{\abelian}}$ induced by the induced isomorphism $\FinStepSolvQuoGeom{\Phi}{\abelian}$ preserves geometric quasi-sections.
	Then $\FinStepSolvQuoGeom{\Phi}{\abelian}$ preserves decomposition groups.
\end{proposition}

\begin{proof}
	By Lemma~\ref{geomreco1}\ref{finitereco}, $k_{1}$ is finite if and only if $k_{2}$ is finite.
	When $k_{1}$ is infinite, $\FinStepSolvQuo{\EtFundGrpGeom{1}}{\abelian}$ is a free $\hat{\mathbb{Z}}^{p_{1}'}$-module.
	Hence we have $p_{1}=p_{2}$ in this case.
	In particular, we obtain that $k_{2}$ is also perfect.
	By Lemma~\ref{geomreco2}\ref{rreco}, we have $r_{2}=r_{1}\geq 5$.
	By the assumption, the isomorphism $\FinStepSolvQuoGeom{\Phi}{\abelian}$ preserves geometric quasi-sections.
	Furthermore, Lemma~\ref{geomreco2_inerreco} implies that the isomorphism $\FinStepSolvQuoGeom{\Phi}{\abelian}$ preserves inertia groups.
	By Lemmas~\ref{geomreco2_inerreco} and~\ref{prop:cuspcriterion-sim}, the isomorphism $\FinStepSolvQuoGeom{\Phi}{\abelian}$ preserves whether a quasi-section is cuspidal or non-cuspidal.
	Here, the relation in Definition~\ref{decorel} is defined using the group-theoretical reconstruction of inertia groups (see~Lemma~\ref{geomreco2_inerreco}).
	Hence, by Lemma~\ref{lemma:reconstruct-cuspidality-and-cusp-inertia} and the hypothesis $r_{1}\geq 5$, for every $\mathfrak{s}\in \mathcal{QS}^{\geometric}_{1}$ we have
	\begin{equation*}
		\FinStepSolvQuoGeom{\Phi}{\abelian}(I_{\mathfrak{s}})
		=
		I_{\FinStepSolvQuoGeom{\Phi}{\abelian}(\mathfrak{s})}.
	\end{equation*}
	On the other hand, Lemma~\ref{cor2.13comm} implies that the decomposition group $D_{\mathfrak{s}}$ is determined group-theoretically from $\mathfrak{s}$ and, in the cuspidal case, from $I_{\mathfrak{s}}$.
	Hence $\FinStepSolvQuoGeom{\Phi}{\abelian}$ preserves decomposition groups.
\end{proof}

In the finite-field case, geometric sections admit a group-theoretical reconstruction (see~\cite[Proposition~(0.7)]{MR1478817}).
Hence the following corollary holds:

\begin{corollary}\label{bidecoabel_cor}
	We keep the notation of Notation~\ref{notation_basic1}.
	Let
	\begin{equation*}
		\Phi\colon
		\FinStepSolvQuoGeom{\EtFundGrpPi{1}}{\metabelian}
		\xrightarrow{\ \sim\ }
		\FinStepSolvQuoGeom{\EtFundGrpPi{2}}{\metabelian}
	\end{equation*}
	be an isomorphism.
	Assume that $k_{1}$ is finite and that $r_{1}\geq 5$.
	Then the induced isomorphism $\FinStepSolvQuoGeom{\Phi}{\abelian}$ preserves decomposition groups.
\end{corollary}

\begin{proof}
	By Lemma~\ref{geomreco1}\ref{finitereco}, $k_{2}$ is finite.
	The set $\mathcal{QS}_{i}^{\geometric}$ can be reconstructed group-theoretically from $\FinStepSolvQuoGeom{\EtFundGrpPi{i}}{\metabelian}$, as proved in \cite[Proposition~(0.7)]{MR1478817} (see~\cite[Lemma~2.8]{MR4745885} for more details in the abelian setting).
	Hence the assertion follows from Proposition~\ref{bidecoabel}.
\end{proof}

\begin{remark}\label{r=5remark}
	We keep the notation of Notation~\ref{notation_basic1}.
	The assumption $r_{1}\geq 5$ in Proposition~\ref{bidecoabel} and Corollary~\ref{bidecoabel_cor} comes from the reconstruction of the inertia subgroup $I_{\mathfrak{s}}$ attached to a geometric cuspidal quasi-section $\mathfrak{s}$.
	Indeed, to prove Proposition~\ref{bidecoabel}, we used the following steps:
	\begin{enumerate}[(i)]
		\item
		      We characterize group-theoretically whether a geometric quasi-section is cuspidal or non-cuspidal.
		      This is proved only under the assumption $r\geq 4$ by Lemma~\ref{prop:cuspcriterion-sim}.
		\item
		      We reconstruct $I_{\mathfrak{s}}$ using Lemma~\ref{lemma:reconstruct-cuspidality-and-cusp-inertia}, which studies the image of $\mathfrak{s}$ under the quotient by an inertia subgroup $I$.
		      The corresponding quotient curve is of type $(0,r-1)$.
		      Therefore, in order to apply the above group-theoretical characterization of cuspidality/non-cuspidality to the corresponding quotient curve, we need $r-1\geq 4$; equivalently, $r\geq 5$.
		\item
		      Finally, we reconstruct decomposition groups by Lemma~\ref{cor2.13comm}.
	\end{enumerate}
	If we could establish a group-theoretical characterization of cuspidal and non-cuspidal geometric quasi-sections also in the case $r=3$ (see Proposition~\ref{sepprop_quasisection}\ref{sepprop_quasisection_2}), then we could apply the same argument to Proposition~\ref{bidecoabel} and Corollary~\ref{bidecoabel_cor} when $r\geq 4$.
	However, at the time of writing, the author does not know whether such a characterization for $r=3$ is available.
\end{remark}

To prove the first main theorem of this paper, we need the following lemma:

\begin{lemma}\label{inj_decopre}
	We keep the notation of Notation~\ref{notation_basic1}.
	Assume that $k_{1}$ is finite and that $r\geq 2$.
	We write
	\begin{equation*}
		\Isom^{\mathrm{DP}}(\FinStepSolvQuoGeom{\EtFundGrpPi{1}}{\abelian},\FinStepSolvQuoGeom{\EtFundGrpPi{2}}{\abelian})
	\end{equation*}
	for the set of decomposition-preserving isomorphisms from $\FinStepSolvQuoGeom{\EtFundGrpPi{1}}{\abelian}$ to $\FinStepSolvQuoGeom{\EtFundGrpPi{2}}{\abelian}$.
	Then the natural map
	\begin{equation*}
		\Isom^{\mathrm{DP}}
		\bigl(
		\FinStepSolvQuoGeom{\EtFundGrpPi{1}}{\abelian},
		\FinStepSolvQuoGeom{\EtFundGrpPi{2}}{\abelian}
		\bigr)
		\to
		\Isom\bigl(
		\mathrm{Dec}(\FinStepSolvQuoGeom{\EtFundGrpPi{1}}{\abelian}),
		\mathrm{Dec}(\FinStepSolvQuoGeom{\EtFundGrpPi{2}}{\abelian})
		\bigr)
	\end{equation*}
	is injective.
\end{lemma}

\begin{proof}
	Let
	\begin{equation*}
		\Psi,\Psi'
		\in
		\Isom^{\mathrm{DP}}
		\bigl(
		\FinStepSolvQuoGeom{\EtFundGrpPi{1}}{\abelian},
		\FinStepSolvQuoGeom{\EtFundGrpPi{2}}{\abelian}
		\bigr)
	\end{equation*}
	be two isomorphisms that induce the same bijection on decomposition groups.
	By Lemma~\ref{geomreco1}\ref{finitereco}, the field $k_{2}$ is also finite.
	As in Lemma~\ref{geomreco1}\ref{geompireco}, the isomorphisms $\Psi$ and $\Psi'$ induce isomorphisms
	\begin{equation*}
		\overline{\Psi},\overline{\Psi'}\colon \AbsGalGrp{k_{1}}
		\xrightarrow{\ \sim\ }
		\AbsGalGrp{k_{2}}.
	\end{equation*}
	By Lemma~\ref{geomreco2}\ref{Frobreco}, both $\overline{\Psi}$ and $\overline{\Psi'}$ preserve the Frobenius element.
	Since $\AbsGalGrp{k_{1}}$ is topologically generated by $\Frobenius_{k_{1}}$, we obtain $\overline{\Psi}=\overline{\Psi'}$.
	Let $\UnivCov{u}_{1}\in \left(\FinStepSolvQuoGeom{\UnivCov{X_{1}}}{\abelian}\right)^{\closed}$, and let $u_{1}\in X_{1}^{\closed}$ be its image.
	Since $\Psi$ and $\Psi'$ induce the same map on decomposition groups, there exists a unique closed point $\UnivCov{u}_{2}\in \FinStepSolvQuoGeom{\UnivCov{X_{2}}}{\abelian}$ such that
	\begin{equation*}
		\Psi(D_{\UnivCov{u}_{1}})
		=
		D_{\UnivCov{u}_{2}}
		=
		\Psi'(D_{\UnivCov{u}_{1}}).
	\end{equation*}
	(Here, we used the assumption $r\geq 2$, see Remark~\ref{rem1.8}.)
	Let $u_{2}\in X_{2}^{\closed}$ be the image of $\UnivCov{u}_{2}$.
	Since $\UnivCov{u}_{1}$ and $\UnivCov{u}_{2}$ are non-cuspidal, the natural projections induce isomorphisms
	\begin{equation*}
		D_{\UnivCov{u}_{1}}
		\xrightarrow{\ \sim\ }
		\AbsGalGrp{\kappa(u_{1})},
		\qquad
		D_{\UnivCov{u}_{2}} \xrightarrow{\ \sim\ } \AbsGalGrp{\kappa(u_{2})}.
	\end{equation*}
	Let $\Frobenius_{\UnivCov{u}_{1}}\in D_{\UnivCov{u}_{1}}$ (resp.\ $\Frobenius_{\UnivCov{u}_{2}}\in D_{\UnivCov{u}_{2}}$) be the unique element mapping to the Frobenius element of $\AbsGalGrp{\kappa(u_{1})}$ (resp.\ $\AbsGalGrp{\kappa(u_{2})}$).
	Since $\overline{\Psi}=\overline{\Psi'}$ and both send $D_{\UnivCov{u}_{1}}$ onto $D_{\UnivCov{u}_{2}}$, we obtain
	\begin{equation*}
		\Psi(\Frobenius_{\UnivCov{u}_{1}})
		=
		\Frobenius_{\UnivCov{u}_{2}}
		=
		\Psi'(\Frobenius_{\UnivCov{u}_{1}}).
	\end{equation*}
	By the Chebotarev density theorem, the set
	\begin{equation*}
		\left\{
		\Frobenius_{\UnivCov{u}_{1}}
		\ \middle|\
		\UnivCov{u}_{1}\in
		\left(\FinStepSolvQuoGeom{\UnivCov{X_{1}}}{\abelian}\right)^{\closed}
		\right\}
	\end{equation*}
	is dense in $\FinStepSolvQuoGeom{\EtFundGrpPi{1}}{\abelian}$.
	Since $\Psi$ and $\Psi'$ are continuous and coincide on this dense subset, we conclude that $\Psi=\Psi'$.
\end{proof}

\section{Weak bi-anabelian results over finite fields}\label{subsectionfinweak}

In this section, we prove a pro-prime-to-$p$ version of the metabelian Grothendieck conjecture for affine hyperbolic genus-$0$ curves over finite fields (see Theorem~\ref{finGCweak} below).
The analogous conjecture for the geometrically maximal metabelian tame fundamental groups, rather than the pro-prime-to-$p$ quotients, was proved in \cite[Theorem~2.16]{MR4745885}.

\begin{notation}\label{addgroupdef}
	We keep the notation of Notation~\ref{notation_basic1}.
	Let $\UnivCov{v}$ be a closed point of $\FinStepSolvQuoGeom{\UnivCov{\Compactification{X}}}{\abelian}$ lying above a closed point $v$ of $X$.
	Assume that $k$ is finite.
	\begin{enumerate}[(1)]
		\item
		      Let $\ord_{v}\colon \FunctionField{X}^{\times} \twoheadrightarrow \mathbb{Z}$ be the valuation associated to $v$.
			We write 
			\begin{equation*}
			\mathcal{O}_{X,v}\coloneq\{a \in \FunctionField{X}\mid \ord_{v}(a)\geq 0\}
			\end{equation*}
			for the valuation ring of $\FunctionField{X}$ at $v$, and $\mathfrak{m}_{X,v}$ for the maximal ideal of $\mathcal{O}_{X,v}$. 
		\item
		      We write
		      \begin{equation*}
			      F_{\UnivCov{v}}
		      \end{equation*}
		      for the inverse image of $\GeneSubgrp{\Frobenius_{\kappa(v)}} \subset \AbsGalGrp{\kappa(v)}$ under the natural homomorphism $(D_{\UnivCov{v}})^{\abelian}\to \AbsGalGrp{\kappa(v)}$, where $\kappa(v)$ is the residue field of $X$ at $v$.
		      By \cite[Equation~(2.6)]{MR4745885}, this group depends only on $v$ up to isomorphism, so we also write $F_{v}$ instead of $F_{\UnivCov{v}}$.
		      We write
		      \begin{equation*}
			      \mathbb{A}_{\FinStepSolvQuoGeom{\EtFundGrpPi{}}{\abelian}}^{\times}
			      \coloneq
			      \prod_{v \in (\Compactification{X})^{\closed}}' F_{v},
		      \end{equation*}
		      the restricted direct product with respect to the subgroups
		      $\ker(F_{v} \to \AbsGalGrp{k})$.
		      We also write
		      \begin{equation*}
			      \FunctionField{\FinStepSolvQuoGeom{\EtFundGrpPi{}}{\abelian}}^{\times}
			      \coloneq
			      \ker(\mathbb{A}_{\FinStepSolvQuoGeom{\EtFundGrpPi{}}{\abelian}}^{\times}
			      \to
			      \FinStepSolvQuoGeom{\EtFundGrpPi{}}{\abelian}),
		      \end{equation*}
		      and
		      \begin{equation*}
			      \ord(\FinStepSolvQuoGeom{\EtFundGrpPi{}}{\abelian})_{v}\colon
			      \mathbb{A}_{\FinStepSolvQuoGeom{\EtFundGrpPi{}}{\abelian}}^{\times}
			      \to
			      F_{v}
			      \twoheadrightarrow
			      F_{v}/\ker(F_{v}
			      \to
			      \AbsGalGrp{k})
			      \xrightarrow{\ \sim\ }
			      \mathbb{Z}
		      \end{equation*}
		      for the composite, where the last isomorphism is determined by
		      $\Frobenius_{\kappa(v)} \mapsto 1$.
		      By restriction, we obtain a homomorphism
		      \begin{equation*}
			      \ord(\FinStepSolvQuoGeom{\EtFundGrpPi{}}{\abelian})_{v}\colon \FunctionField{\FinStepSolvQuoGeom{\EtFundGrpPi{}}{\abelian}}^{\times} \to \mathbb{Z}.
		      \end{equation*}
		      Finally, we define the multiplicative monoid
		      \begin{equation*}
			      \FunctionField{\FinStepSolvQuoGeom{\EtFundGrpPi{}}{\abelian}}
			      \coloneq
			      \FunctionField{\FinStepSolvQuoGeom{\EtFundGrpPi{}}{\abelian}}^{\times} \cup \{0\}.
		      \end{equation*}
	\end{enumerate}
\end{notation}

\begin{lemma}\label{reconstructionlemmaaddbase}
	We keep the notation of Notation~\ref{notation_basic1}.
	Assume that $k$ is finite and that $r\geq 3$.
	Then the natural homomorphism $\FunctionField{X}^{\times} \to \FunctionField{\FinStepSolvQuoGeom{\EtFundGrpPi{}}{\abelian}}^{\times}$ induced by the commutative diagram
	\begin{equation*}
		\vcenter{
		\xymatrix@C=42pt{
		1 \ar[r]
		& \FunctionField{X}^{\times} \ar[r] \ar[d]
		& \mathbb{A}_{\FunctionField{X}}^{\times} \ar[r] \ar@{->>}[d]
		& G_{\FunctionField{X}}^{\abelian} \ar@{->>}[d] \\
		1 \ar[r]
		& \FunctionField{\FinStepSolvQuoGeom{\EtFundGrpPi{}}{\abelian}}^{\times} \ar[r]
		& \mathbb{A}_{\FinStepSolvQuoGeom{\EtFundGrpPi{}}{\abelian}}^{\times} \ar[r]
		& \FinStepSolvQuoGeom{\EtFundGrpPi{}}{\abelian}
		}
		}
	\end{equation*}
	is an isomorphism.
	Moreover, under this isomorphism, the following hold:
	\begin{enumerate}[(1)]
		\item\label{globalcasereco2}
		      For every closed point $v$ of $\Compactification{X}$, the diagram
		      \begin{equation*}
			      \vcenter{
			      \xymatrix@C=45pt{
			      \FunctionField{X}^{\times} \ar@{->>}[r]^-{\ord_{v}} \ar[d]_{\cong}
			      & \mathbb{Z} \ar[d]^{=} \\
			      \FunctionField{\FinStepSolvQuoGeom{\EtFundGrpPi{}}{\abelian}}^{\times} \ar[r]^-{\ord(\FinStepSolvQuoGeom{\EtFundGrpPi{}}{\abelian})_{v}}
			      & \mathbb{Z}
			      }
			      }
		      \end{equation*}
		      is commutative.
		\item\label{globalcasereco3}
		      If $v$ is a closed point of $X$ (resp.~$E$), then $\mathcal{O}_{X,v}^{\times}$ (resp.~$1+\mathfrak{m}_{X,v}$) corresponds to
		      \begin{equation*}
			      \ker(\FunctionField{\FinStepSolvQuoGeom{\EtFundGrpPi{}}{\abelian}}^{\times} \to F_{v}).
		      \end{equation*}
		\item\label{globalcasereco4}
		      Let $L$ and $L^{\dag}$ be finite extensions of $k$ with $L\subset L^{\dag}$.
		      Then $\FinStepSolvQuoGeom{\EtFundGrpPi{X_{L}}}{\abelian}$ and $\FinStepSolvQuoGeom{\EtFundGrpPi{X_{L^{\dag}}}}{\abelian}$ are open subgroups of $\FinStepSolvQuoGeom{\EtFundGrpPi{}}{\abelian}$, and the diagram
		      \begin{equation*}
			      \vcenter{
			      \xymatrix@C=45pt{
			      \FunctionField{X_{L^{\dag}}}^{\times}
			      \ar[r]^-{\sim}
			      & \FunctionField{\FinStepSolvQuoGeom{\EtFundGrpPi{X_{L^{\dag}}}}{\abelian}}^{\times}
			      \\
			      \FunctionField{X_{L}}^{\times}
			      \ar[r]^-{\sim}
			      \ar@{^{(}->}[u]
			      & \FunctionField{\FinStepSolvQuoGeom{\EtFundGrpPi{X_{L}}}{\abelian}}^{\times}
			      \ar@{^{(}->}[u]
			      }
			      }
		      \end{equation*}
		      is commutative.
		      Here, the right-hand vertical arrow is induced by the transfer morphisms on the local factors.
		      In particular, we obtain an isomorphism
		      \begin{equation*}
			      \FunctionField{X_{\FieldAlgeClosure{k}}}\xrightarrow{\sim}
			      \plim_{L}\FunctionField{\FinStepSolvQuoGeom{\EtFundGrpPi{X_{L}}}{\abelian}}
		      \end{equation*}
	\end{enumerate}
\end{lemma}

\begin{proof}
	Since $g=0$, the $p$-rank is also $0$.
	In particular, the abelianization of the geometric tame fundamental group of $X$ is isomorphic to $\FinStepSolvQuo{\EtFundGrpGeom{}}{\abelian}$ (see~\cite[(2.5)]{MR1040998}).
	Moreover, for any closed point $\UnivCov{v}$ of $\FinStepSolvQuoGeom{\UnivCov{\Compactification{X}}}{\abelian}$, the decomposition group of the maximal geometrically abelian tame fundamental group of $X$ at any closed point above $\UnivCov{v}$ is isomorphic to $D_{\UnivCov{v},\FinStepSolvQuoGeom{\EtFundGrpPi{}}{\abelian}}$ by \cite[Lemma~1.10]{MR4745885}.
	Hence the same class-field-theoretic argument as in \cite[Lemma~2.13(1)(2) and Lemma~2.14]{MR4745885} proves the assertions.
\end{proof}

\begin{notation}\label{defihomset1}
	For any scheme $S_{i}$ and any scheme $T_{i}$ over $S_{i}$ ($i=1,2$), we write
	\begin{equation*}
		\Isom(T_{1}/S_{1},T_{2}/S_{2})
	\end{equation*}
	for the following set:
	\begin{equation*}
		\left\{(\tilde{F},F)\in \Isom_{\mathrm{Sch}}(T_{1},T_{2})\times \Isom_{\mathrm{Sch}}(S_{1},S_{2})\middle|\vcenter{
		\xymatrix{
		T_{1}\ar[r]^{\tilde{F}}\ar[d]& T_{2}\ar[d]\\
		S_{1}\ar[r]^{F} & S_{2}
		} }\text{ is commutative.}
		\right\}
	\end{equation*}
\end{notation}

\begin{theorem}\label{finGCweak}
	Let $i=1,2$.
	Let $k_{i}$ be a finite field of characteristic $p_{i}$.
	Let $(\Compactification{X}_{i},E_{i})$ be a smooth curve of type $(g_{i},r_{i})$ over $k_{i}$, and set $X_{i}\coloneq \Compactification{X}_{i}\setminus E_{i}$.
	(For the remaining notation, see the Introduction.)
	Assume that $g_{1}=0$ and that $r_{1}\geq 5$.
	Then there exists a map
	\begin{equation*}
		\mathcal{F}\colon
		\Isom(\FinStepSolvQuoGeom{\EtFundGrpPi{1}}{\metabelian},\FinStepSolvQuoGeom{\EtFundGrpPi{2}}{\metabelian})
		\to
		\Isom(X_{1,\FieldAlgeClosure{k}_{1}}/X_{1}, X_{2,\FieldAlgeClosure{k}_{2}}/X_{2})
	\end{equation*}
	such that the following triangular diagram commutes:
	\begin{equation}\label{finGCweak_tridiag}
		\vcenter{
		\xymatrix@C=45pt{
		\Isom(\FinStepSolvQuoGeom{\UnivCov{X_{1}}}{\metabelian}/X_{1},\FinStepSolvQuoGeom{\UnivCov{X_{2}}}{\metabelian}/X_{2})
		\ar@{^{(}->}[r]
		\ar@{->>}[d]
		&
		\Isom(\FinStepSolvQuoGeom{\EtFundGrpPi{1}}{\metabelian},\FinStepSolvQuoGeom{\EtFundGrpPi{2}}{\metabelian})
		\ar[d]
		\ar[ld]_{\mathcal{F}}
		\\
		\Isom(X_{1,\FieldAlgeClosure{k}_{1}}/X_{1},X_{2,\FieldAlgeClosure{k}_{2}}/X_{2})
		\ar@{^{(}->}[r]
		&
		\Isom(\FinStepSolvQuoGeom{\EtFundGrpPi{1}}{\abelian},\FinStepSolvQuoGeom{\EtFundGrpPi{2}}{\abelian})/\Inn(\FinStepSolvQuo{\EtFundGrpGeom{2}}{\abelian})
		}
		}
	\end{equation}
	Here, $\FinStepSolvQuoGeom{\UnivCov{X_{i}}}{\metabelian}$ denotes the geometrically maximal pro-prime-to-$p_{i}$ metabelian Galois covering of $\Compactification{X}_{i}$ that is \'etale over $X_{i}$, the lower horizontal arrows are the natural maps, and the right vertical arrow is induced by Lemma~\ref{geomreco1}\ref{geompireco}.
\end{theorem}

\begin{proof}
	The natural maps
	\begin{equation*}
		\Isom(\FinStepSolvQuoGeom{\UnivCov{X_{1}}}{\metabelian}/X_{1},\FinStepSolvQuoGeom{\UnivCov{X_{2}}}{\metabelian}/X_{2})
		\to
		\Isom(\FinStepSolvQuoGeom{\EtFundGrpPi{1}}{\metabelian},\FinStepSolvQuoGeom{\EtFundGrpPi{2}}{\metabelian})
	\end{equation*}
	and
	\begin{equation*}
		\Isom(X_{1,\FieldAlgeClosure{k}_{1}}/X_{1},X_{2,\FieldAlgeClosure{k}_{2}}/X_{2})
		\to
		\Isom(\FinStepSolvQuoGeom{\EtFundGrpPi{1}}{\abelian},\FinStepSolvQuoGeom{\EtFundGrpPi{2}}{\abelian})/\Inn(\FinStepSolvQuo{\EtFundGrpGeom{2}}{\abelian})
	\end{equation*}
	are injective; see~\cite[Lemma~4.9]{MR4745885}.
	We may assume that $\Isom(\FinStepSolvQuoGeom{\EtFundGrpPi{1}}{\metabelian},\FinStepSolvQuoGeom{\EtFundGrpPi{2}}{\metabelian})\neq\emptyset$.
	By \cite[Proposition~1.7(1)]{MR4745885}, we have $g_{2}=g_{1}(=0)$.
	Applying Lemma~\ref{geomreco2}\ref{rreco}\ref{nodereco} to both sides, we obtain $r_{2}=r_{1}(\geq 5)$, $p_{2}=p_{1}$, and $\CardinalityOfSet{k_{2}}=\CardinalityOfSet{k_{1}}$.
	Therefore, we omit the index $i$ from $g_{i}$, $r_{i}$, and $p_{i}$.

	We first construct the map $\mathcal{F}$.
	Let $\Phi$ be an element of $\Isom(\FinStepSolvQuoGeom{\EtFundGrpPi{1}}{\metabelian},\FinStepSolvQuoGeom{\EtFundGrpPi{2}}{\metabelian})$.
	By \cite[Proposition~(0.7)]{MR1478817} (or \cite[Lemma~2.8]{MR4745885}), the bijection $\mathcal{QS}_{\FinStepSolvQuoGeom{\EtFundGrpPi{1}}{\abelian}}\xrightarrow{\sim}\mathcal{QS}_{\FinStepSolvQuoGeom{\EtFundGrpPi{2}}{\abelian}}$ induced by $\FinStepSolvQuoGeom{\Phi}{\abelian}$ preserves geometric quasi-sections.
	Moreover, Corollary~\ref{bidecoabel_cor} implies that $\FinStepSolvQuoGeom{\Phi}{\abelian}$ preserves decomposition groups.
	Therefore, as noted in Remark~\ref{rem1.8}, we obtain the following bijections arranged in a commutative diagram:
	\begin{equation*}
		\xymatrix@C=45pt{
		\mathcal{QS}_{\FinStepSolvQuoGeom{\EtFundGrpPi{1}}{\abelian}}^{\geometric}
		\ar[r]^{\sim}
		\ar@{->>}[d]
		&
		\mathcal{QS}_{\FinStepSolvQuoGeom{\EtFundGrpPi{2}}{\abelian}}^{\geometric}
		\ar@{->>}[d]
		\\
		\mathrm{Dec}\left(\FinStepSolvQuoGeom{\EtFundGrpPi{1}}{\abelian}\right)
		\ar[r]^{\sim}
		&
		\mathrm{Dec}\left(\FinStepSolvQuoGeom{\EtFundGrpPi{2}}{\abelian}\right)
		\\
		\left(\FinStepSolvQuoGeom{\UnivCov{\Compactification{X}_{1}}}{\abelian}\right)^{\closed}
		\ar[u]^{\cong}
		\ar[r]^{\sim}
		&
		\left(\FinStepSolvQuoGeom{\UnivCov{\Compactification{X}_{2}}}{\abelian}\right)^{\closed}
		\ar[u]^{\cong}
		}
	\end{equation*}
	By Lemma~\ref{geomreco2}\ref{Frobreco}, the induced isomorphism $\AbsGalGrp{k_{1}}\xrightarrow{\sim} \AbsGalGrp{k_{2}}$ preserves the Frobenius element.
	Therefore, by definition, $\Phi$ induces an isomorphism
	\begin{equation*}
		\mathbb{A}_{\FinStepSolvQuoGeom{\EtFundGrpPi{1}}{\abelian}}^{\times}
		\xrightarrow{\ \sim\ }
		\mathbb{A}_{\FinStepSolvQuoGeom{\EtFundGrpPi{2}}{\abelian}}^{\times}.
	\end{equation*}
	Hence Lemma~\ref{reconstructionlemmaaddbase}\ref{globalcasereco4} implies that $\Phi$ induces a multiplicative monoid isomorphism
	\begin{equation}\label{geomisommap}
		\FunctionField{X_{1,\FieldAlgeClosure{k_{1}}}}\to \FunctionField{X_{2,\FieldAlgeClosure{k_{2}}}}.
	\end{equation}
	Moreover, the multiplicative monoid isomorphism in \eqref{geomisommap} is additive.
	Indeed, this follows from Lemma~\ref{reconstructionlemmaaddbase}\ref{globalcasereco2}\ref{globalcasereco3} and \cite[Lemma~4.7]{MR1478817} (or \cite[Lemma~2.15]{MR4745885}).
	Hence $\Phi$ induces a scheme isomorphism
	\begin{equation*}
		\Compactification{X}_{1,\FieldAlgeClosure{k}_{1}}
		\xrightarrow{\ \sim\ }
		\Compactification{X}_{2,\FieldAlgeClosure{k}_{2}}.
	\end{equation*}
	By restricting the isomorphism \eqref{geomisommap}, we have an isomorphism $\FunctionField{X_{1}}\xrightarrow{\sim}\FunctionField{X_{2}}$.
	Hence we also obtain a scheme isomorphism
	\begin{equation*}
		\Compactification{X}_{1}
		\xrightarrow{\ \sim\ }
		\Compactification{X}_{2}.
	\end{equation*}
	Since $\Phi$ preserves cuspidal decomposition groups, by restricting the above scheme isomorphisms we obtain
	\begin{equation*}
		\UnivCov{u}\colon X_{1,\FieldAlgeClosure{k}_{1}}\xrightarrow{\sim} X_{2,\FieldAlgeClosure{k}_{2}}
		\qquad
		\text{and}
		\qquad
		u\colon X_{1}\xrightarrow{\sim} X_{2}.
	\end{equation*}
	The pair of these isomorphisms is the desired element $\mathcal{F}(\Phi)$.

	Next, we show the commutativity of the lower-right triangle of \eqref{finGCweak_tridiag}.
	By the construction of $\mathcal{F}$, $\mathcal{F}(\Phi)$ and $\Phi$ induce the same bijection $\mathrm{Dec}(\FinStepSolvQuoGeom{\EtFundGrpPi{1}}{\abelian})\xrightarrow{\sim}\mathrm{Dec}(\FinStepSolvQuoGeom{\EtFundGrpPi{2}}{\abelian})$ up to conjugacy by $\FinStepSolvQuo{\EtFundGrpGeom{2}}{\abelian}$.
	By Lemma~\ref{inj_decopre}, the map
	\begin{equation*}
		\Isom^{\mathrm{DP}}(\FinStepSolvQuoGeom{\EtFundGrpPi{1}}{\abelian},\FinStepSolvQuoGeom{\EtFundGrpPi{2}}{\abelian})/\FinStepSolvQuo{\EtFundGrpGeom{2}}{\abelian}
		\to
		\Isom(\mathrm{Dec}(\FinStepSolvQuoGeom{\EtFundGrpPi{1}}{\abelian}),\mathrm{Dec}(\FinStepSolvQuoGeom{\EtFundGrpPi{2}}{\abelian}))/\FinStepSolvQuo{\EtFundGrpGeom{2}}{\abelian}
	\end{equation*}
	is injective.
	This proves the commutativity of the lower-right triangle.
	After composing with the second lower horizontal arrow of \eqref{finGCweak_tridiag}, the commutativity of the right-hand triangle follows immediately.
	This completes the proof.
\end{proof}

\begin{corollary}\label{finitrelative}
	We keep the notation and assumptions of Theorem~\ref{finGCweak}.
	Assume that
	\begin{equation*}
		\FinStepSolvQuoGeom{\EtFundGrpPi{1}}{\metabelian}
		\cong
		\FinStepSolvQuoGeom{\EtFundGrpPi{2}}{\metabelian}
	\end{equation*}
	as profinite groups.
	Then $X_{1}$ and $X_{2}$ are isomorphic as schemes.
\end{corollary}

\begin{proof}
	The assertion follows immediately from Theorem~\ref{finGCweak}.
\end{proof}

We introduce the following notation for the next section:

\begin{definition}\label{frobconstreco}
	We keep the notation of Notation~\ref{notation_basic1}.
	Assume that $k$ is finite and that $r\geq 5$.
	We define a multiplicative monoid
	\begin{equation*}
		k(\FinStepSolvQuoGeom{\EtFundGrpPi{}}{\abelian})
		\coloneq
		\left\{
		a\in
		\FunctionField{\FinStepSolvQuoGeom{\EtFundGrpPi{}}{\abelian}}^{\times}
		\ \middle|\
		\ord(\FinStepSolvQuoGeom{\EtFundGrpPi{}}{\abelian})_{v}(a)=0
		\text{ for every closed point }v\text{ of }X
		\right\}
		\cup
		\{0\}.
	\end{equation*}
	Passing to the direct limit over all finite extensions $L/k$, we define
	\begin{equation*}
		\FieldAlgeClosure{k}(\FinStepSolvQuoGeom{\EtFundGrpPi{}}{\abelian})
		\coloneq
		\varinjlim_{L/k}
		k(\FinStepSolvQuoGeom{\EtFundGrpPi{X_{L}}}{\abelian}).
	\end{equation*}
\end{definition}

\begin{remark}\label{remalphaphi}
	We keep the notation of Notation~\ref{notation_basic1}.
	Assume that $k$ is finite and that $r\geq 5$.
	Under the isomorphism in Lemma~\ref{reconstructionlemmaaddbase}\ref{globalcasereco2}, the multiplicative monoids $k(\FinStepSolvQuoGeom{\EtFundGrpPi{}}{\abelian})$ and $\FieldAlgeClosure{k}(\FinStepSolvQuoGeom{\EtFundGrpPi{}}{\abelian})$ are canonically identified with $k$ and $\FieldAlgeClosure{k}$, respectively, as multiplicative monoids.
	By construction and Lemma~\ref{reconstructionlemmaaddbase}\ref{globalcasereco4}, any isomorphism $\Phi\colon\FinStepSolvQuoGeom{\EtFundGrpPi{1}}{\metabelian}\xrightarrow{\sim}\FinStepSolvQuoGeom{\EtFundGrpPi{2}}{\metabelian}$ induces a commutative diagram of multiplicative monoids
	\begin{equation*}
		\xymatrix{
		k(\FinStepSolvQuoGeom{\EtFundGrpPi{1}}{\abelian})
		\ar[r]^-{\sim}
		\ar@{^{(}->}[d]
		&
		k(\FinStepSolvQuoGeom{\EtFundGrpPi{2}}{\abelian})
		\ar@{^{(}->}[d]
		\\
		\FieldAlgeClosure{k}(\FinStepSolvQuoGeom{\EtFundGrpPi{1}}{\abelian})
		\ar[r]^-{\sim}
		&
		\FieldAlgeClosure{k}(\FinStepSolvQuoGeom{\EtFundGrpPi{2}}{\abelian}).
		}
	\end{equation*}
	Moreover, the proof of Theorem~\ref{finGCweak} shows that the lower horizontal isomorphism arises from a field isomorphism.
	Therefore, when $k_{1}=k_{2}$, the isomorphism $\Phi$ induces an element of $\AbsGalGrp{\mathbb{F}_{p}}$.
	Thus there exists a unique element $\alpha(\Phi)\in \hat{\mathbb{Z}}$ such that the induced automorphism is $\Frobenius_{\mathbb{F}_{p}}^{\,\alpha(\Phi)}$.
\end{remark}

\section{Specialization}\label{subsectionspecialization}

In this section, we study specialization morphisms for smooth curves over henselian regular local rings.
They relate the geometric fundamental groups of the generic fiber and the special fiber, and will be used later to extend Theorem~\ref{finGCweak} to the metabelian Grothendieck conjecture over finitely generated fields.
Here, we define smooth curves over arbitrary schemes, not only over spectra of fields.
\begin{definition}
	Let $S$ be a scheme. Let $\Compactification{\mathcal{X}}$ be a scheme over $S$, and let $\mathcal{E}$ be a (possibly empty) closed subscheme of $\Compactification{\mathcal{X}}$.
	We say that the pair $(\Compactification{\mathcal{X}},\mathcal{E})$ is a \emph{smooth curve of type $(g,r)$} over $S$ if the following conditions~(a)--(b) hold:
	\begin{enumerate}[(a)]
		\item
		      $\Compactification{\mathcal{X}} \to S$ is a smooth proper morphism of relative dimension one whose geometric fibers are connected of genus $g$;
		\item
		      the composition of $\mathcal{E}\hookrightarrow \Compactification{\mathcal{X}}\to S$ is finite, \'etale, and of degree $r$.
	\end{enumerate}
	If there is no risk of confusion, we call the complement $\mathcal{X}\coloneq \Compactification{\mathcal{X}}\setminus\mathcal{E}$ a \emph{smooth curve of type $(g,r)$} over $S$.
	We call $\mathcal{X}$ \emph{hyperbolic} if $2 - 2g - r < 0$ (in other words, $(g, r) \neq (0, 0), (0, 1), (0, 2), (1, 0)$).
\end{definition}

We first consider the specialization morphism.
Let $R$ be a henselian regular local ring with the field of fractions $K$.
Let $\mathfrak{X}$ be a smooth curve of type $(g,r)$ over $\Spec(R)$.
We have the following basic lemma:

\begin{lemma}[{\cite[Expos\'e XIII, Lemme 4.2, Exemples 4.4]{MR0354651}}]\label{sgabasicexaxt}
	In the above notation, for any point $u\in \Spec(R)$ and any geometric point $\overline{u}$ of $\Spec(R)$ lying over $u$, the sequence of natural morphisms
	\begin{equation*}
		\EtFundGrpGeom{\mathfrak{X}_{u}}^{\tame}
		\to
		\EtFundGrpPi{\mathfrak{X}}^{\tame}
		\to
		\EtFundGrpTameWithPt{\Spec(R)}{\overline{u}}
		\to 1
	\end{equation*}
	is exact.
\end{lemma}

Let $s\in \Spec(R)$ be the closed point, and let $\eta\in \Spec(R)$ be the generic point.
Write $\kappa(s)$ for the residue field at $s$ of characteristic $p_{s}$ $(\geq 0)$.
By \cite[Expos\'e X, Corollaire 3.9]{MR0354651}, we have the specialization morphism
\begin{equation*}
	\EtFundGrpGeom{\mathfrak{X}_{\eta}}^{\tame}
	\to
	\EtFundGrpGeom{\mathfrak{X}_{s}}^{\tame},
\end{equation*}
and it is an isomorphism after taking the pro-prime-to-$p_{s}$ quotients of both sides.
Therefore, by taking the geometrically maximal metabelian pro-prime-to-$p_{s}$ quotient, we obtain the following commutative diagram of exact rows:

\begin{equation}\label{commdaigspaci2}
	\vcenter{\xymatrix{
	1
	\ar[r]
	&
	\EtFundGrpGeom{\mathfrak{X}_{\eta}}^{p_{s}',\metabelian}
	\ar[r]\ar[dd]^-{\cong}_-{\mathrm{sp}_s}
	&
	\EtFundGrpPi{\mathfrak{X}_{\eta}}^{(p_{s}',\metabelian)}
	\ar[r]^-{\rho}\ar@{->>}[dd]_-{\tilde{\mathrm{sp}}_s}
	&
	\AbsGalGrp{K}
	\ar[r]\ar@{->>}[d]
	&
	1
	\\
	&
	&
	&
	\EtFundGrpTameWithPt{\Spec(R)}{\overline{\eta}}
	&
	\\
	1
	\ar[r]
	&
	\FinStepSolvQuo{\EtFundGrpGeom{\mathfrak{X}_{s}}}{\metabelian}
	\ar[r]
	&
	\FinStepSolvQuoGeom{\EtFundGrpPi{\mathfrak{X}_{s}}}{\metabelian}
	\ar[r]
	&
	\AbsGalGrp{\kappa(s)}
	\ar[r]\ar[u]^{\cong}
	&
	1
	\\
	}}
\end{equation}

The following lemma gives a group-theoretical description of the kernel of the above morphism $\tilde{\mathrm{sp}}_s$.

\begin{lemma}\label{recobasicfiberproduct}
	Consider a commutative diagram of profinite groups with exact rows
	\begin{equation*}
		\vcenter{\xymatrix{
		1 \ar[r] & A \ar[r] \ar[d]^{\cong} & B_{1} \ar[r]^{\rho} \ar[d]^{\rho_{B}} & C_{1} \ar[r] \ar[d]^{\rho_{C}} & 1 \\
		1 \ar[r] & A \ar[r] & B_{2} \ar[r] & C_{2} \ar[r] & 1.
		}}
	\end{equation*}
	Put $I\coloneq \ker(\rho_{C})$, and let $B_{1}\to \Aut(A)$ be the action by conjugation.
	Assume that $A$ is center-free.
	Then
	\begin{equation*}
		\ker(\rho_{B})=\rho^{-1}(I)\cap \ker(B_{1}\to \Aut(A))
	\end{equation*}
	holds.
\end{lemma}

\begin{proof}
	We consider the following commutative diagram with exact rows:
	\begin{equation*}
		\vcenter{\xymatrix{
		1 \ar[r] & A \ar[r] \ar[d]^{\cong} & B_{1} \ar[r]^{\rho} \ar[d]^{\rho_{B}} & C_{1} \ar[r] \ar[d]^{\rho_{C}} & 1 \\
		1 \ar[r] & A \ar[d]^{\cong}\ar[r] & B_{2} \ar[d]\ar[r] & C_{2} \ar[r] \ar[d]& 1.\\
		1 \ar[r] & \Inn(A) \ar[r] & \Aut(A) \ar[r] & \Out(A) \ar[r] & 1.
		}}
	\end{equation*}
	A diagram chase shows that
	\begin{equation*}
		\ker(\rho_{B})\subset\rho^{-1}(I)\cap \ker(B_{1}\to \Aut(A)).
	\end{equation*}
	We show the converse inclusion.
	Let $x\in \rho^{-1}(I)\cap \ker(B_{1}\to \Aut(A))$.
	By the definition, the image of $x$ in $C_{2}$ is trivial.
	Hence $\rho_{B}(x)\in A$.
	On the other hand, by the definition, the image of $x$ in $\Aut(A)$ is also trivial.
	Therefore, we have $\rho_{B}(x)\in \CenterSubgrp{A}$.
	Since $A$ is center-free, we obtain $\rho_{B}(x)=1$; equivalently, $x\in \ker(\rho_{B})$.
	This completes the proof.
\end{proof}

\begin{proposition}\label{sp_prop_reco}
	We keep the above notation.
	Assume that $\mathfrak{X}_{\eta}$ is hyperbolic.
	Let $I$ be the inertia subgroup of $\AbsGalGrp{K}$, and let
	\begin{equation*}
		f\colon
		\EtFundGrpPi{\mathfrak{X}_{\eta}}^{(p_{s}',\metabelian)}
		\to
		\Aut\left(\EtFundGrpGeom{\mathfrak{X}_{\eta}}^{p_{s}',\metabelian}\right)
	\end{equation*}
	be the action by conjugation.
	Then we have
	\begin{equation*}
		\ker\left(\tilde{\mathrm{sp}}_{s}\right)=\rho^{-1}(I)\cap \ker(f)
	\end{equation*}
\end{proposition}

\begin{proof}
	The center-freeness of $\EtFundGrpGeom{\mathfrak{X}_{\eta}}^{p_{s}',\metabelian}$ follows from \cite[Theorem~2.9]{yamaguchi2026centerfreenessfinitestepsolvablegroups}; in the present situation, however, \cite[Proposition~1.1.6]{MR4578639} already suffices.
	Thus, the assertion follows immediately from Lemma~\ref{recobasicfiberproduct} applied to the diagram \eqref{commdaigspaci2}.
\end{proof}
%%%%%%%%%%%%%%%%%%%%%%%%%%%%%%%%%%%%%%%%%%

\begin{remark}\label{sprem}
	In general, the specialization morphism $\EtFundGrpGeom{\mathfrak{X}_{\eta}}^{\tame}
		\twoheadrightarrow
		\EtFundGrpGeom{\mathfrak{X}_{s}}^{\tame}$ is not injective.
	However, by the metabelian good reduction criterion established in \cite[Theorem~3.8]{MR4745885}, the kernel of the specialization morphism can be reconstructed group-theoretically from $\FinStepSolvQuoGeom{(\EtFundGrpPi{\mathfrak{X}_{\eta}}^{\tame})}{\metabelian}
		\to \AbsGalGrp{K}$ together with the inertia subgroup $I$ of $\AbsGalGrp{K}$; see \cite[Section~3]{MR4745885} for more details.
\end{remark}

\begin{corollary}\label{localspecializationiso}
	Let $R$ be a henselian regular local ring with field of fractions $K$.
	Let $\eta\in \Spec(R)$ be the generic point, and let $s$ be the closed point of $\Spec(R)$.
	Write $p_{s}$ for the characteristic of $\kappa(s)$.
	Let $i=1,2$.
	Let $\mathfrak{X}_{i}$ be a smooth curve over $\Spec(R)$.
	Assume that $\mathfrak{X}_{1,\eta}$ is hyperbolic.
	Let
	\begin{equation*}
		\Phi\colon
		\EtFundGrpPi{\mathfrak{X}_{1,\eta}}^{(\metabelian)}
		\xrightarrow{\sim}
		\EtFundGrpPi{\mathfrak{X}_{2,\eta}}^{(\metabelian)}
	\end{equation*}
	be a $\AbsGalGrp{K}$-isomorphism.
	Then $\Phi$ induces a unique $\AbsGalGrp{\kappa(s)}$-isomorphism
	\begin{equation*}
		\Phi_{s}\colon
		\FinStepSolvQuoGeom{\EtFundGrpPi{\mathfrak{X}_{1,s}}}{\metabelian}
		\xrightarrow{\sim}
		\FinStepSolvQuoGeom{\EtFundGrpPi{\mathfrak{X}_{2,s}}}{\metabelian}
	\end{equation*}
	such that the following diagram commutes:
	\begin{equation*}
		\vcenter{
		\xymatrix{
		\EtFundGrpPi{\mathfrak{X}_{1,\eta}}^{(\metabelian)}
		\ar[r]^-{\Phi}
		\ar@{->>}[d]
		&
		\EtFundGrpPi{\mathfrak{X}_{2,\eta}}^{(\metabelian)}
		\ar@{->>}[d]
		\\
		\EtFundGrpPi{\mathfrak{X}_{1,\eta}}^{(p_{s}',\metabelian)}
		\ar@{->>}[d]_-{\tilde{\mathrm{sp}}_{1,s}}
		&
		\EtFundGrpPi{\mathfrak{X}_{2,\eta}}^{(p_{s}',\metabelian)}
		\ar@{->>}[d]_-{\tilde{\mathrm{sp}}_{2,s}}
		\\
		\FinStepSolvQuoGeom{\EtFundGrpPi{\mathfrak{X}_{1,s}}}{\metabelian}
		\ar[r]^-{\Phi_{s}}
		&
		\FinStepSolvQuoGeom{\EtFundGrpPi{\mathfrak{X}_{2,s}}}{\metabelian}
		}
		}
	\end{equation*}
\end{corollary}

\begin{proof}
	We know that a smooth curve over a field is hyperbolic if and only if the maximal pro-$\ell$ metabelian quotient of its geometric tame fundamental group is non-abelian for some prime $\ell$ different from the characteristic of the field.
	Hence $\mathfrak{X}_{2,\eta}$ is also hyperbolic.
	For $i=1,2$, the specialization morphism $\mathrm{sp}_{i,s}$ induces an isomorphism after taking the maximal pro-prime-to-$p_{s}$ quotient.
	Thus the surjection $\tilde{\mathrm{sp}}_{i,s}$ identifies $\FinStepSolvQuoGeom{\EtFundGrpPi{\mathfrak{X}_{i,s}}}{\metabelian}$ with the quotient of $\FinStepSolvQuoGeom{\EtFundGrpPi{\mathfrak{X}_{i,\eta}}}{p_{s}',\metabelian}$ by $\ker(\tilde{\mathrm{sp}}_{i,s})$.
	By Proposition~\ref{sp_prop_reco}, the subgroup $\ker(\tilde{\mathrm{sp}}_{i,s})$ is determined by the projection to $\AbsGalGrp{K}$, the inertia subgroup of $\AbsGalGrp{K}$, and the conjugation action on the geometric metabelian quotient.
	Since $\Phi$ is a $\AbsGalGrp{K}$-isomorphism, it identifies these data for $i=1,2$.
	Therefore $\Phi$ descends uniquely to an isomorphism
	\begin{equation*}
		\Phi_{s}\colon
		\FinStepSolvQuoGeom{\EtFundGrpPi{\mathfrak{X}_{1,s}}}{\metabelian}
		\xrightarrow{\sim}
		\FinStepSolvQuoGeom{\EtFundGrpPi{\mathfrak{X}_{2,s}}}{\metabelian}.
	\end{equation*}
	Since $\Phi$ is over $\AbsGalGrp{K}$, the induced isomorphism is over $\AbsGalGrp{\kappa(s)}$.
	The commutativity of the diagram is immediate from the construction, and the uniqueness follows from the surjectivity of vertical arrows.
\end{proof}

\section{Weak bi-anabelian results over finitely generated fields}\label{subsectionfingeneweak}

In this section, we prove a pro-prime-to-$p$ version of the metabelian Grothendieck conjecture for genus-$0$ hyperbolic curves over $k$.

We first recall the moduli scheme of ordered hyperbolic genus-$0$ curves.
We write $\mathcal{M}_{g,r}$ for the moduli stack of smooth curves of genus $g$ equipped with $r$ disjoint ordered sections.
By~\cite{MR0702953}, the stack $\mathcal{M}_{0,r}$ is a separated Deligne--Mumford stack over $\Spec(\mathbb{Z})$.
Since an automorphism of a smooth proper genus-$0$ curve that fixes three ordered marked points is trivial, when $r\geq 3$ the stack $\mathcal{M}_{0,r}$ is represented by a scheme.
We denote this scheme by $M_{0,r}$.

\begin{definition}\label{defcyclotome}
	We keep the notation of Notation~\ref{notation_basic1}.
	Assume that $r\geq 2$.
	We define the cyclotome associated to $X$ by
	\begin{equation}\label{cyclotome}
		\Lambda\coloneq\Lambda_{X}
		\coloneq
		\ker\left(
		\left(
			\bigoplus_{I\in \mathrm{Iner}(\FinStepSolvQuo{\EtFundGrpGeom{X}}{\abelian})} I
			\right)
		\to
		\EtFundGrpGeom{X}^{\abelian}
		\right).
	\end{equation}
	By \eqref{wfseq}, we have an isomorphism $\Lambda_{X}\xrightarrow{\sim}\hat{\mathbb{Z}}^{p'}(1)$ of $\AbsGalGrp{k}$-modules.
\end{definition}

\begin{lemma}\label{lem:global-frobenius-discrepancy}
	Let $S$ be a regular integral scheme of finite type over $\Spec(\mathbb{Z})$, and let $\eta$ be its generic point.
	Let $p\geq 0$ be the characteristic of the residue field $\kappa(\eta)$ of $S$ at $\eta$, and let $r\geq 5$.
	For $i=1,2$, let $\zeta_{i}\colon S\to \mathcal{M}_{0,r}$ be a morphism, let $(\Compactification{\mathcal{X}_{i}},\mathcal{E}_{i})$ be the corresponding smooth curve of type $(0,r)$ over $S$, and set $\mathcal{X}_{i}\coloneq \Compactification{\mathcal{X}_{i}}\setminus \mathcal{E}_{i}$.
	Let
	\begin{equation*}
		\Phi\colon
		\FinStepSolvQuoGeom{\EtFundGrpPi{\mathcal{X}_{1,\eta}}}{\metabelian}
		\xrightarrow{\ \sim\ }
		\FinStepSolvQuoGeom{\EtFundGrpPi{\mathcal{X}_{2,\eta}}}{\metabelian}
	\end{equation*}
	be a $\AbsGalGrp{\kappa(\eta)}$-isomorphism, and let $\lambda_{\Phi}\colon \Lambda_{\mathcal{X}_{1,\eta}}\xrightarrow{\sim}\Lambda_{\mathcal{X}_{2,\eta}}$ be the canonical isomorphism induced by $\Phi$.
	For every closed point $s\in S$, we write $p_{s}$ for the characteristic of $\kappa(s)$ and
	\begin{equation*}
		\Phi_{s}\colon
		\FinStepSolvQuoGeom{\EtFundGrpPi{\mathcal{X}_{1,s}}}{\metabelian}
		\xrightarrow{\ \sim\ }
		\FinStepSolvQuoGeom{\EtFundGrpPi{\mathcal{X}_{2,s}}}{\metabelian}
	\end{equation*}
	for the $\AbsGalGrp{\kappa(s)}$-isomorphism induced by $\Phi$ (see Corollary~\ref{localspecializationiso}).
	Then the following hold:
	\begin{enumerate}[(1)]
		\item\label{lem:global-frobenius-discrepancy-char0}
		      Assume that $p=0$.
		      Then $\zeta_{1}=\zeta_{2}$.
		      Moreover, $\alpha(\Phi_{s})=0$ for every closed point $s\in S$, and, under the canonical identifications $\Lambda_{\mathcal{X}_{i,\eta}}\xrightarrow{\sim}\hat{\mathbb{Z}}(1)$, the isomorphism $\lambda_{\Phi}$ is the identity.

		\item\label{lem:global-frobenius-discrepancy-charp}
		      Assume that $p>0$ and that $\dim(\overline{\zeta_{1}(S)})>0$.
		      Then there exists a unique pair $(n_{1},n_{2})$ of non-negative integers with $n_{1}n_{2}=0$ such that
		      \begin{equation*}
			      \zeta_{1}\circ \Frobenius_{S}^{n_{1}}
			      =
			      \zeta_{2}\circ \Frobenius_{S}^{n_{2}},
		      \end{equation*}
		      where $\Frobenius_{S}$ is the relative Frobenius morphism of $S$ over $\mathbb{F}_{p}$.
		      Moreover, the equality $n_{2}-n_{1}=\alpha(\Phi_{s})$ holds for every closed point $s\in S$ (see Remark~\ref{remalphaphi}), and, under the canonical identifications $\Lambda_{\mathcal{X}_{i,\eta}}\xrightarrow{\sim}\hat{\mathbb{Z}}^{p'}(1)$, the isomorphism $\lambda_{\Phi}$ is multiplication by $p^{n_{2}-n_{1}}$.
	\end{enumerate}
\end{lemma}

\begin{proof}
	Since $r\geq 5$, the group $\FinStepSolvQuo{\EtFundGrpGeom{\mathcal{X}_{1,\eta}}}{\abelian}$ is nontrivial.
	By Lemma~\ref{geomreco1}\ref{geompireco} and Lemma~\ref{geomreco2_inerreco}, the isomorphism $\Phi$ induces an isomorphism $\FinStepSolvQuoGeom{\Phi}{\abelian}\colon \FinStepSolvQuoGeom{\EtFundGrpPi{\mathcal{X}_{1,\eta}}}{\abelian}\xrightarrow{\sim}\FinStepSolvQuoGeom{\EtFundGrpPi{\mathcal{X}_{2,\eta}}}{\abelian}$ that preserves inertia groups.
	Hence $\Phi$ induces canonically an isomorphism $\lambda_{\Phi}\colon \Lambda_{\mathcal{X}_{1,\eta}}\xrightarrow{\sim}\Lambda_{\mathcal{X}_{2,\eta}}$.

	Let $s\in S^{\closed}$.
	By the same argument, $\Phi_{s}$ induces canonically an isomorphism $\lambda_{\Phi_{s}}\colon \Lambda_{\mathcal{X}_{1,s}}\xrightarrow{\sim}\Lambda_{\mathcal{X}_{2,s}}$.
	Let $(\overline{\mathcal{F}}_{s}(\Phi_{s}),\mathcal{F}_{s}(\Phi_{s}))$ be the image of $\Phi_{s}$ by Theorem~\ref{finGCweak}.
	For $i=1,2$, the specialization morphism induces a commutative diagram with exact rows
	\begin{equation*}
		\vcenter{
		\xymatrix{
		0\ar[r]
		&
		\Lambda_{\mathcal{X}_{i,\eta}}\ar[r]\ar[d]
		&
		\displaystyle\bigoplus_{x\in \mathcal{E}_{i,\eta}(\FieldSepClosure{\kappa(\eta)})}
		I_{x,\EtFundGrpGeom{\mathcal{X}_{i,\eta}}^{\abelian}}
		\ar[r]\ar[d]
		&
		\EtFundGrpGeom{\mathcal{X}_{i,\eta}}^{\abelian}
		\ar[r]\ar[d]
		&
		0
		\\
		0\ar[r]
		&
		\Lambda_{\mathcal{X}_{i,s}}\ar[r]
		&
		\displaystyle\bigoplus_{y\in \mathcal{E}_{i,s}(\FieldSepClosure{\kappa(s)})}
		I_{y,\EtFundGrpGeom{\mathcal{X}_{i,s}}^{\abelian}}
		\ar[r]
		&
		\EtFundGrpGeom{\mathcal{X}_{i,s}}^{\abelian}
		\ar[r]
		&
		0.
		}
		}
	\end{equation*}
	Since $\Phi_{s}$ is obtained from $\Phi$ by specialization, the right-hand two squares are commutative.
	Hence the left-hand square
	\begin{equation*}
		\vcenter{
		\xymatrix{
		\Lambda_{\mathcal{X}_{1,\eta}}
		\ar[r]^-{\lambda_{\Phi}}
		\ar[d]
		&
		\Lambda_{\mathcal{X}_{2,\eta}}
		\ar[d]
		\\
		\Lambda_{\mathcal{X}_{1,s}}
		\ar[r]^-{\lambda_{\Phi_{s}}}
		&
		\Lambda_{\mathcal{X}_{2,s}}
		}
		}
	\end{equation*}
	is also commutative.
	By the commutativity of the right-hand triangle in~\eqref{finGCweak_tridiag}, the isomorphism $\lambda_{\Phi_{s}}$ coincides with the morphism induced by $\overline{\mathcal{F}}_{s}(\Phi_{s})$.
	Under the canonical identifications $\Lambda_{\mathcal{X}_{i,s}}\xrightarrow{\sim}\hat{\mathbb{Z}}^{p'_{s}}(1)$, this is multiplication by $p_{s}^{\alpha(\Phi_{s})}$.
	Choose non-negative integers $n_{1,s}$ and $n_{2,s}$ such that $n_{2,s}-n_{1,s}\equiv \alpha(\Phi_{s})\mod [\kappa(s):\mathbb{F}_{p_{s}}]$.
	Therefore, the Frobenius twists of $\mathcal{F}_{s}(\Phi_{s})$ give a $\kappa(s)$-isomorphism $\mathcal{X}_{1,s}(n_{1,s})\xrightarrow{\sim}\mathcal{X}_{2,s}(n_{2,s})$.
	Hence $(\zeta_{1}|_{s})\circ \Frobenius_{\Spec(\kappa(s))}^{n_{1,s}}=(\zeta_{2}|_{s})\circ \Frobenius_{\Spec(\kappa(s))}^{n_{2,s}}$, so in particular $\zeta_{1}(s)=\zeta_{2}(s)$.
	Thus $\zeta_{1}$ and $\zeta_{2}$ coincide set-theoretically on $S^{\closed}$.

	\medskip

	\noindent
	\ref{lem:global-frobenius-discrepancy-char0}
	Let $s\in S^{\closed}$ be arbitrary.
	Since $S$ is dominant over $\Spec(\mathbb{Z})$, we may choose a closed point $s'\in S$ such that $p_{s}\neq p_{s'}$.
	The image of $\lambda_{\Phi}$ in $\Isom(\Lambda_{\mathcal{X}_{1,s}},\Lambda_{\mathcal{X}_{2,s}})$ is $p_{s}^{\alpha(\Phi_{s})}$, and the image of $\lambda_{\Phi}$ in $\Isom(\Lambda_{\mathcal{X}_{1,s'}},\Lambda_{\mathcal{X}_{2,s'}})$ is $p_{s'}^{\alpha(\Phi_{s'})}$.
	Hence the image of $\lambda_{\Phi}$ in $(\hat{\mathbb{Z}}^{\{p_{s},p_{s'}\}'})^{\times}$ lies in both the subgroup generated by $p_{s}$ and the subgroup generated by $p_{s'}$.
	By~\cite[Theorem~1]{MR0044570}, this intersection is trivial.
	Therefore $p_{s}^{\alpha(\Phi_{s})}=1$, hence $\alpha(\Phi_{s})=0$.
	Since $s$ was arbitrary, we obtain $\alpha(\Phi_{s})=0$ for every closed point $s\in S$.

	Let $\ell$ be a prime number.
	Choose a closed point $s\in S$ such that $p_{s}\neq \ell$.
	Hence the image of $\lambda_{\Phi}$ in $(\hat{\mathbb{Z}}^{p'_{s}})^{\times}$ is trivial, so its $\ell$-primary component is also trivial.
	Since $\ell$ was arbitrary, $\lambda_{\Phi}$ is the identity under the canonical identifications $\Lambda_{\mathcal{X}_{i,\eta}}\xrightarrow{\sim}\hat{\mathbb{Z}}(1)$.

	Since $S$ is of finite type over the Jacobson scheme $\Spec(\mathbb{Z})$, the set $S^{\closed}$ is dense in $S$.
	Since $M_{0,r}$ is separated over $\Spec(\mathbb{Z})$, the equalizer $(\zeta_{1},\zeta_{2})^{-1}(\Delta_{M_{0,r}/\Spec(\mathbb{Z})})$ is a closed subset of $S$ that contains $S^{\closed}$.
	Hence it is equal to $S$, so $\zeta_{1}=\zeta_{2}$.
	This completes the proof.

	\noindent
	\ref{lem:global-frobenius-discrepancy-charp}
	Assume next that $p>0$ and that $\dim(\overline{\zeta_{1}(S)})>0$.
	Therefore $p_{s}=p$ for every closed point $s\in S^{\closed}$.
	Again by the above discussion, the image of $\lambda_{\Phi}$ in $\Isom(\Lambda_{\mathcal{X}_{1,s}},\Lambda_{\mathcal{X}_{2,s}})\cong (\hat{\mathbb{Z}}^{p'})^{\times}$ is $p^{\alpha(\Phi_{s})}$.
	Since the homomorphism $\hat{\mathbb{Z}}\to (\hat{\mathbb{Z}}^{p'})^{\times}$, $\gamma\mapsto p^{\gamma}$, is injective, it follows that $\alpha(\Phi_{s})$ does not depend on $s$.
	We write its common value as $\alpha$.

	By the set-theoretic equality on $S^{\closed}$ and~\cite[Theorem~1.2.1]{MR2012864}, there exists a unique pair $(n_{1},n_{2})$ of non-negative integers with $n_{1}n_{2}=0$ such that
	\begin{equation*}
		\zeta_{1}\circ \Frobenius_{S}^{n_{1}}
		=
		\zeta_{2}\circ \Frobenius_{S}^{n_{2}}.
	\end{equation*}
	For every closed point $s\in S$, put $d_{s}\coloneq [\kappa(\zeta_{1}(s)):\mathbb{F}_{p}]=[\kappa(\zeta_{2}(s)):\mathbb{F}_{p}]$.
	Therefore the equalities above imply $n_{2}-n_{1}\equiv n_{2,s}-n_{1,s}\equiv \alpha\mod d_{s}$.
	Since $\overline{\zeta_{1}(S)}$ is irreducible and positive-dimensional,~\cite[Lemma~4.10]{MR4745885} implies that $n_{2}-n_{1}=\alpha$.
	Therefore $n_{2}-n_{1}=\alpha(\Phi_{s})$ for every closed point $s\in S$.
	Since the image of $\lambda_{\Phi}$ in $(\hat{\mathbb{Z}}^{p'})^{\times}$ is $p^{\alpha}$, it is multiplication by $p^{n_{2}-n_{1}}$.
	This completes the proof.
\end{proof}

\begin{theorem}\label{fingenemainthmchzero}
	Let $k$ be a field finitely generated over $\mathbb{Q}$.
	Let $i=1,2$.
	Let $(\Compactification{X_{i}},E_{i})$ be smooth curves of type $(g_{i},r_{i})$ over $k$, and set $X_{i}\coloneq \Compactification{X_{i}}\setminus E_{i}$.
	Assume that $g_{1}=0$ and that $r_{1}\geq 5$.
	Then for any $\AbsGalGrp{k}$-isomorphism
	\begin{equation*}
		\Phi\colon \FinStepSolvQuoGeom{\EtFundGrpPi{1}}{\metabelian}\xrightarrow{\sim}\FinStepSolvQuoGeom{\EtFundGrpPi{2}}{\metabelian},
	\end{equation*}
	there exists a unique $k$-isomorphism $u\colon X_{1}\xrightarrow{\sim}X_{2}$ such that the induced isomorphism
	\begin{equation*}
		\FinStepSolvQuoGeom{u}{\abelian}\colon \FinStepSolvQuoGeom{\EtFundGrpPi{1}}{\abelian}\xrightarrow{\sim}\FinStepSolvQuoGeom{\EtFundGrpPi{2}}{\abelian}
	\end{equation*}
	coincides with the isomorphism $\FinStepSolvQuoGeom{\Phi}{\abelian}$ induced by $\Phi$.
\end{theorem}

\begin{proof}
	By~\cite[Proposition~1.7(1)]{MR4745885} and Lemma~\ref{geomreco2}\ref{rreco}, we have $g_{2}=0$ and $r_{2}=r_{1}$.
	We write $r\coloneq r_{1}(=r_{2})$.
	We first assume that $E_{i}(\FieldAlgeClosure{k})=E_{i}(k)$ for $i=1,2$.
	After shrinking a regular model if necessary, we may choose a regular integral scheme $S$ of finite type over $\Spec(\mathbb{Z})$ with function field $k$ such that $(\Compactification{X}_{i},E_{i})$ extends to a smooth curve $(\Compactification{\mathcal{X}_{i}},\mathcal{E}_{i})$ of type $(0,r)$ over $S$.
	Write $\eta$ for the generic point of $S$.
	Let
	\begin{equation*}
		\zeta_{i}\colon S\to M_{0,r}
	\end{equation*}
	be the morphism classifying the ordered family $(\Compactification{\mathcal{X}_{i}},\mathcal{E}_{i})$.
	By Lemma~\ref{lem:global-frobenius-discrepancy}\ref{lem:global-frobenius-discrepancy-char0}, we have $\zeta_{1}=\zeta_{2}$.
	Since $M_{0,r}$ represents the moduli problem of smooth proper genus-$0$ curves equipped with $r$ disjoint ordered sections, there exists a unique $S$-isomorphism $(\Compactification{\mathcal{X}_{1}},\mathcal{E}_{1})\xrightarrow{\sim}(\Compactification{\mathcal{X}_{2}},\mathcal{E}_{2})$ that preserves the ordered sections.
	By taking the generic fiber, we obtain a $k$-isomorphism $u\colon X_{1}\xrightarrow{\sim}X_{2}$.
	Let
	\begin{equation*}
		\FinStepSolvQuoGeom{u}{\abelian}\colon \FinStepSolvQuoGeom{\EtFundGrpPi{X_{1}}}{\abelian}\xrightarrow{\sim}\FinStepSolvQuoGeom{\EtFundGrpPi{X_{2}}}{\abelian}
	\end{equation*}
	be the induced isomorphism.
	By construction, $\FinStepSolvQuoGeom{u}{\abelian}$ and $\FinStepSolvQuoGeom{\Phi}{\abelian}$ preserve inertia groups and induce the same bijection on the sets of cusps.
	Moreover, Lemma~\ref{lem:global-frobenius-discrepancy}\ref{lem:global-frobenius-discrepancy-char0} shows that $\FinStepSolvQuoGeom{\Phi}{\abelian}$ induces the identity on $\Lambda_{X_{1}}\xrightarrow{\sim}\Lambda_{X_{2}}$.
	The same is true for $\FinStepSolvQuoGeom{u}{\abelian}$, since $u$ preserves the ordered cusps.
	Since $X_{1}$ has genus $0$, the exact sequence~\eqref{wfseq} implies that $\FinStepSolvQuoGeom{u}{\abelian}=\FinStepSolvQuoGeom{\Phi}{\abelian}$.
	By \cite[Lemma~4.9]{MR4745885}, the natural map
	\begin{equation}\label{grwgraagr}
		\Isom_{k}(X_{1},X_{2})
		\to
		\Isom_{\AbsGalGrp{k}}(\FinStepSolvQuoGeom{\EtFundGrpPi{X_{1}}}{\abelian},\FinStepSolvQuoGeom{\EtFundGrpPi{X_{2}}}{\abelian})
	\end{equation}
	is injective.
	Thus such a $u$ is unique.
	This completes the proof in the case where $E_{i}(\FieldAlgeClosure{k})=E_{i}(k)$ for $i=1,2$.

	Next, we show the general case.
	Let $L$ be a finite Galois extension of $k$ such that $E_{i,L}(\FieldAlgeClosure{L})=E_{i,L}(L)$ for $i=1,2$.
	By the above argument, there exists an $L$-isomorphism $u_{L}\colon X_{1,L}\xrightarrow{\sim}X_{2,L}$ satisfying the condition in the statement.
	It is enough to show that $u_{L}$ is fixed by $\mathrm{Gal}(L/k)$.
	Let $\sigma\in \mathrm{Gal}(L/k)$.
	Since $\FinStepSolvQuoGeom{\Phi}{\abelian}$ is induced by an $\AbsGalGrp{k}$-isomorphism, the bijection on cusps determined by $\FinStepSolvQuoGeom{\Phi}{\abelian}$ is $\mathrm{Gal}(L/k)$-equivariant.
	Hence $\sigma\cdot u_{L}$ and $u_{L}$ induce the same bijection on the sets of cusps.
	Moreover, $\sigma\cdot u_{L}$ also preserves the ordered cusps, so it induces the identity on $\Lambda_{X_{1,L}}\xrightarrow{\sim}\Lambda_{X_{2,L}}$.
	Therefore the exact sequence~\eqref{wfseq} implies that $\sigma\cdot u_{L}=u_{L}$.
	By Galois descent, $u_{L}$ descends to a $k$-isomorphism $u\colon X_{1}\xrightarrow{\sim}X_{2}$.
	Since $\FinStepSolvQuoGeom{u_{L}}{\abelian}$ coincides with the isomorphism induced by $\Phi$, we obtain $\FinStepSolvQuoGeom{u}{\abelian}=\FinStepSolvQuoGeom{\Phi}{\abelian}$.
	The uniqueness of $u$ follows from the injectivity of \eqref{grwgraagr}.
	This completes the proof.
\end{proof}

We say that a smooth curve over a field of positive characteristic is \emph{isotrivial} if, after a finite extension of the base field, it descends to a smooth curve over a finite field.

\begin{theorem}\label{fingenemainthmchpositive}
	Let $k$ be a field finitely generated over $\mathbb{F}_{p}$.
	Let $i=1,2$.
	Let $(\Compactification{X_{i}},E_{i})$ be non-isotrivial smooth curves of type $(g_{i},r_{i})$ over $k$, and set $X_{i}\coloneq \Compactification{X_{i}}\setminus E_{i}$.
	Assume that $g_{1}=0$ and that $r_{1}\geq 5$.
	Then for any $\AbsGalGrp{k}$-isomorphism
	\begin{equation*}
		\Phi\colon \FinStepSolvQuoGeom{\EtFundGrpPi{1}}{\metabelian}\xrightarrow{\sim}\FinStepSolvQuoGeom{\EtFundGrpPi{2}}{\metabelian},
	\end{equation*}
	there exists a unique pair $(n_{1},n_{2})$ of non-negative integers with $n_{1}n_{2}=0$ and a unique $k$-isomorphism $u\colon X_{1}(n_{1})\xrightarrow{\sim}X_{2}(n_{2})$ such that, if we denote by $\iota_{i}\colon \FinStepSolvQuoGeom{\EtFundGrpPi{X_{i}(n_{i})}}{\abelian}\xrightarrow{\sim}\FinStepSolvQuoGeom{\EtFundGrpPi{i}}{\abelian}$ the natural isomorphism induced by the relative Frobenius morphism for each $i=1,2$, then the induced isomorphism
	\begin{equation*}
		\FinStepSolvQuoGeom{u}{\abelian}\colon \FinStepSolvQuoGeom{\EtFundGrpPi{X_{1}(n_{1})}}{\abelian}\xrightarrow{\sim}\FinStepSolvQuoGeom{\EtFundGrpPi{X_{2}(n_{2})}}{\abelian}
	\end{equation*}
	coincides with the composition $\iota_{2}^{-1}\circ\FinStepSolvQuoGeom{\Phi}{\abelian}\circ \iota_{1}$.
\end{theorem}

\begin{proof}
	By~\cite[Proposition~1.7(1)]{MR4745885} and Lemma~\ref{geomreco2}\ref{rreco}, we have $g_{2}=0$ and $r_{2}=r_{1}$.
	We write $r\coloneq r_{1}(=r_{2})$.
	We first assume that $E_{i}(\FieldSepClosure{k})=E_{i}(k)$ for $i=1,2$.
	After shrinking a regular model if necessary, we may choose a regular integral scheme $S$ of finite type over $\Spec(\mathbb{Z})$ with function field $k$ such that $(\Compactification{X}_{i},E_{i})$ extends to a smooth curve $(\Compactification{\mathcal{X}_{i}},\mathcal{E}_{i})$ of type $(0,r)$ over $S$.
	Write $\eta$ for the generic point of $S$.
	Let
	\begin{equation*}
		\zeta_{i}\colon S\to M_{0,r}
	\end{equation*}
	be the morphism classifying the ordered family $(\Compactification{\mathcal{X}_{i}},\mathcal{E}_{i})$.
	By Lemma~\ref{lem:global-frobenius-discrepancy}\ref{lem:global-frobenius-discrepancy-charp} and the non-isotriviality hypothesis, there exists a unique pair $(n_{1},n_{2})$ of non-negative integers with $n_{1}n_{2}=0$ such that
	\begin{equation*}
		\zeta_{1}\circ \Frobenius_{S}^{n_{1}}
		=
		\zeta_{2}\circ \Frobenius_{S}^{n_{2}}.
	\end{equation*}
	Since $M_{0,r}$ represents the moduli problem of smooth proper genus-$0$ curves equipped with $r$ disjoint ordered sections, there exists a unique $S$-isomorphism $(\Compactification{\mathcal{X}_{1}}(n_{1}),\mathcal{E}_{1}(n_{1}))\xrightarrow{\sim}(\Compactification{\mathcal{X}_{2}}(n_{2}),\mathcal{E}_{2}(n_{2}))$ that preserves the ordered sections.
	By taking the generic fiber, we obtain a $k$-isomorphism $u\colon X_{1}(n_{1})\xrightarrow{\sim}X_{2}(n_{2})$.
	Let
	\begin{equation*}
		\FinStepSolvQuoGeom{u}{\abelian}\colon \FinStepSolvQuoGeom{\EtFundGrpPi{X_{1}(n_{1})}}{\abelian}\xrightarrow{\sim}\FinStepSolvQuoGeom{\EtFundGrpPi{X_{2}(n_{2})}}{\abelian}
	\end{equation*}
	be the induced isomorphism.
	By construction, $\iota_{2}\circ\FinStepSolvQuoGeom{u}{\abelian}\circ \iota_{1}^{-1}$ and $\FinStepSolvQuoGeom{\Phi}{\abelian}$ preserve inertia groups and induce the same bijection on the sets of cusps.
	Since $u$ preserves the ordered cusps, it induces the identity on $\Lambda_{X_{1}(n_{1})}\xrightarrow{\sim}\Lambda_{X_{2}(n_{2})}$.
	Under the canonical identifications with $\hat{\mathbb{Z}}^{p'}(1)$, the Frobenius isomorphism $\Lambda_{X_{i}(n_{i})}\xrightarrow{\sim}\Lambda_{X_{i}}$ is multiplication by $p^{n_{i}}$.
	Hence $\iota_2\circ\FinStepSolvQuoGeom{u}{\abelian}\circ \iota_1^{-1}$ induces multiplication by $p^{n_{2}-n_{1}}$ on $\Lambda_{X_{1}}\xrightarrow{\sim}\Lambda_{X_{2}}$.
	By Lemma~\ref{lem:global-frobenius-discrepancy}\ref{lem:global-frobenius-discrepancy-charp} and the non-isotriviality hypothesis, the same is true for $\FinStepSolvQuoGeom{\Phi}{\abelian}$.
	Since $X_{1}$ has genus $0$, the exact sequence~\eqref{wfseq} implies that $\iota_2\circ\FinStepSolvQuoGeom{u}{\abelian}\circ \iota_1^{-1}=\FinStepSolvQuoGeom{\Phi}{\abelian}$.
	The uniqueness of $u$ follows from \cite[Lemma~4.9]{MR4745885}.
	This completes the proof in the case where $E_{i}(\FieldSepClosure{k})=E_{i}(k)$ for $i=1,2$.
	The proof of the general case is the same as that of Theorem~\ref{fingenemainthmchzero}.
	This completes the proof.
\end{proof}

\begin{corollary}\label{fingeneweak}
	Let $k$ be a field finitely generated over its prime field of characteristic $p\geq 0$.
	Let $i=1,2$.
	Let $X_{i}$ be smooth curves of type $(g_{i},r_{i})$ over $k$.
	Assume that $g_{1}=0$ and that $r_{1}\geq 5$.
	Then the following hold:
	\begin{enumerate}[(1)]
		\item \label{fingeneweakzero}
		      Assume moreover that $p=0$.
		      Then
		      \begin{equation*}
			      X_{1}
			      \cong_{k}
			      X_{2}
			      \iff
			      \FinStepSolvQuoGeom{\EtFundGrpPi{X_{1}}}{\metabelian}
			      \cong_{\AbsGalGrp{k}}
			      \FinStepSolvQuoGeom{\EtFundGrpPi{X_{2}}}{\metabelian}.
		      \end{equation*}

		\item \label{fingeneweakpositive}
		      Assume moreover that $p>0$ and that $X_{1}$ and $X_{2}$ are non-isotrivial.
		      Then
		      \begin{equation*}
			      \FinStepSolvQuoGeom{\EtFundGrpPi{X_{1}}}{\metabelian}
			      \cong_{\AbsGalGrp{k}}
			      \FinStepSolvQuoGeom{\EtFundGrpPi{X_{2}}}{\metabelian},
		      \end{equation*}
		      if and only if there exists a pair $(n_{1},n_{2})$ of non-negative integers such that
		      \begin{equation*}
			      X_{1}(n_{1})\cong_{k}X_{2}(n_{2}).
		      \end{equation*}
	\end{enumerate}
\end{corollary}

\begin{proof}
	The assertions follow from Theorems~\ref{fingenemainthmchzero} and~\ref{fingenemainthmchpositive}.
\end{proof}

Note that, when $p>0$, smooth curves of type $(0,3)$ are isotrivial.

\begin{proposition}\label{fingeneweak03}
	The same statement as in Corollary~\ref{fingeneweak}\ref{fingeneweakzero} holds when $(g_{1},r_{1})=(0,3)$.
\end{proposition}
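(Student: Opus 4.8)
The plan is to bypass the reconstructions of decomposition groups and of the function field that are needed when $r_{1}\ge 5$, and to use instead the rigidity of curves of type $(0,3)$: such a curve has no moduli, so as a $k$-scheme it is determined by the $G_{k}$-set of its cusps alone.

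First I would normalize the curves. By Lemma~\ref{geomreco1}\ref{greco}\ref{rreco} we have $g_{1}=g_{2}=0$ and $r_{1}=r_{2}=3$. Since $E_{i}$ is finite \'etale over $k$ of odd degree $3$, it contains a closed point of odd degree; hence the smooth genus~$0$ curve $X_{i}$ carries a divisor of odd degree, so its index (which divides $\deg(-K_{X_{i}})=2$) equals $1$, i.e. $X_{i}\cong\mathbb{P}^{1}_{k}$. Thus $U_{i}=\mathbb{P}^{1}_{k}-\Lambda_{i}$ with $\Lambda_{i}\subset\mathbb{P}^{1}(k^{\mathrm{sep}})$ a $G_{k}$-stable set of cardinality $3$ and $E_{i}(k^{\mathrm{sep}})=\Lambda_{i}$. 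The implication $\Rightarrow$ is clear (and for $p>0$ one takes $(n_{1},n_{2})=(0,0)$), so only $\Leftarrow$ remains.

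Next, from a $G_{k}$-isomorphism $\Phi\colon\Pi^{(2,\pro p')}_{U_{1}}\xrightarrow{\sim}\Pi^{(2,\pro p')}_{U_{2}}$ I would pass to the induced $G_{k}$-isomorphism $\Phi^{1}\colon\Pi^{(\mathrm{ab},\pro p')}_{U_{1}}\xrightarrow{\sim}\Pi^{(\mathrm{ab},\pro p')}_{U_{2}}$ (Lemma~\ref{geomreco1}\ref{geompireco}) and apply Lemma~\ref{geomreco1}\ref{inerreco} (valid here since $m=2>1\ge 1$ and $\overline{\Pi}^{2,\pro p'}_{U_{1}}$ is nontrivial) to conclude that $\Phi^{1}$ preserves inertia groups. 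Since the natural map $\Lambda_{i}\to\mathrm{Iner}^{\neq\{1\}}(\overline{\Pi}^{\mathrm{ab},\pro p'}_{U_{i}})$ is a $G_{k}$-equivariant bijection (cf. \cite[Lemma 1.11]{MR4745885}) and $\Phi^{1}$ lies over $\mathrm{id}_{G_{k}}$, this yields a $G_{k}$-equivariant bijection $b\colon\Lambda_{1}\xrightarrow{\sim}\Lambda_{2}$. Because $\mathrm{PGL}_{2}$ acts sharply $3$-transitively on $\mathbb{P}^{1}$, there is a unique $\phi\in\mathrm{PGL}_{2}(k^{\mathrm{sep}})$ extending $b$; for each $\sigma\in G_{k}$ the Galois conjugate $\phi^{\sigma}$ again extends $b$ (as $\Lambda_{1}$ is $G_{k}$-stable and $b$ is $G_{k}$-equivariant), whence $\phi^{\sigma}=\phi$ and $\phi\in\mathrm{PGL}_{2}(k)$. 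Then $\phi$ defines a $k$-automorphism of $\mathbb{P}^{1}_{k}$ carrying $\Lambda_{1}$ to $\Lambda_{2}$, hence a $k$-isomorphism $U_{1}\xrightarrow{\sim}U_{2}$; when $p>0$ we take $(n_{1},n_{2})=(0,0)$, and the uniqueness clause is vacuous since $U_{1,\overline{k}}\cong\mathbb{P}^{1}_{\overline{k}}-\{0,1,\infty\}$ always descends to $\overline{\mathbb{F}}_{p}$.

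I do not expect a genuine obstacle. The only two points requiring care are the conic/index argument identifying $X_{i}$ with $\mathbb{P}^{1}_{k}$ and the $G_{k}$-equivariance of $\Lambda_{i}\simeq\mathrm{Iner}^{\neq\{1\}}(\overline{\Pi}^{\mathrm{ab},\pro p'}_{U_{i}})$ together with the fact that $\Phi^{1}$ is compatible with $\mathrm{id}_{G_{k}}$; granting these, the proof reduces to sharp $3$-transitivity of $\mathrm{PGL}_{2}$. This is precisely why, for $r_{1}=3$, none of the material of Sections~\ref{subsectionreco}--\ref{subsectionfingeneweak} beyond Lemma~\ref{geomreco1} is needed, and why the case that genuinely resists this method is $r_{1}=4$ (where the cross-ratio, not just the $G_{k}$-set of cusps, must be recovered).
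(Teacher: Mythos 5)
Your proposal is correct and takes essentially the same route as the paper: reconstruct $(g_{1},r_{1})=(g_{2},r_{2})=(0,3)$ and the $G_{k}$-equivariant bijection of cusps from the non-trivial inertia groups via Lemma \ref{geomreco1}, then use the rigidity of three marked points on $\mathbb{P}^{1}$ together with Galois descent to produce a $k$-isomorphism (so $(n_{1},n_{2})=(0,0)$, the uniqueness clause being vacuous). The only cosmetic differences are that you check Galois-compatibility of the geometric isomorphism by the uniqueness in sharp $3$-transitivity of $\mathrm{PGL}_{2}$ where the paper invokes Lemma \ref{injectiveofPi}, and that your preliminary normalization $X_{i}\cong\mathbb{P}^{1}_{k}$ over $k$ is harmless but unnecessary, since the paper works over $k^{\mathrm{sep}}$ and descends at the end.
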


\begin{proof}
	Let $\Phi\in \Isom_{\AbsGalGrp{k}}(\FinStepSolvQuoGeom{\EtFundGrpPi{1}}{\metabelian},\FinStepSolvQuoGeom{\EtFundGrpPi{2}}{\metabelian})$.
	Hence $g_{1}=g_{2}=0$ and $r_{1}=r_{2}=3$ by~\cite[Proposition~1.7(1)]{MR4745885} and Lemma~\ref{geomreco2}\ref{rreco}.
	Lemma~\ref{geomreco2_inerreco} implies that $\Phi$ induces an isomorphism $\mathrm{Iner}(\FinStepSolvQuo{\EtFundGrpGeom{X_{1}}}{\abelian})\xrightarrow{\sim}\mathrm{Iner}(\FinStepSolvQuo{\EtFundGrpGeom{X_{2}}}{\abelian})$ that is compatible with the actions of $\AbsGalGrp{k}$.
	We know that $E_{i}(\FieldAlgeClosure{k})\to \mathrm{Iner}(\FinStepSolvQuo{\EtFundGrpGeom{i}}{\abelian})$ is a $\AbsGalGrp{k}$-isomorphism (see~\cite[Lemma~1.11]{MR4745885}).
	Hence $\Phi$ induces a bijection $E_{1}(\FieldAlgeClosure{k})\xrightarrow{\sim}E_{2}(\FieldAlgeClosure{k})$ that is compatible with the actions of $\AbsGalGrp{k}$.
	Since $r=\CardinalityOfSet{E_{i}(\FieldAlgeClosure{k})}=3$, there exists a unique $\FieldAlgeClosure{k}$-isomorphism $\tilde{\phi}\colon (\Compactification{X}_{1})_{\FieldAlgeClosure{k}}\xrightarrow{\sim}(\Compactification{X}_{2})_{\FieldAlgeClosure{k}}$ that induces the same bijection on cusps.
	Hence $\tilde{\phi}$ is compatible with the actions of $\AbsGalGrp{k}$.
	Therefore, by Galois descent, $\tilde{\phi}$ descends to an isomorphism $\Compactification{X}_{1}\xrightarrow{\sim}\Compactification{X}_{2}$ over $k$, hence induces an isomorphism $X_{1}\xrightarrow{\sim}X_{2}$ over $k$.
	Thus, the assertions in Corollary~\ref{fingeneweak}\ref{fingeneweakzero} hold.
\end{proof}

\begin{corollary}
	Let $k$ be a field finitely generated over its prime field of characteristic $p\geq 0$.
	Let $i=1,2$.
	Let $X_{i}$ be smooth curves of type $(g_{i},r_{i})$ over $k$.
	Assume that $X_{1}$ and $X_{2}$ are non-isotrivial if $p>0$.
	Assume that $g_{1}=0$ and that $r_{1}\geq 5$.
	Let
	\begin{equation*}
		\Phi\colon \FinStepSolvQuoGeom{\EtFundGrpPi{1}}{\metabelian}\xrightarrow{\sim}\FinStepSolvQuoGeom{\EtFundGrpPi{2}}{\metabelian}
	\end{equation*}
	be an $\AbsGalGrp{k}$-isomorphism.
	Then the induced isomorphism $\FinStepSolvQuoGeom{\Phi}{\abelian}$ preserves decomposition groups.
\end{corollary}

\begin{proof}
	Since $\FinStepSolvQuoGeom{\Phi}{\abelian}$ is geometric by Theorems~\ref{fingenemainthmchzero} and~\ref{fingenemainthmchpositive}, the assertion is immediate.
\end{proof}

\section*{Acknowledgments}
The author is sincerely grateful to Prof. Akio~Tamagawa for his generous guidance and helpful suggestions throughout this work.
The author also thanks Prof. Yuichiro~Taguchi, Takahiro Murotani, and Shun~Ishii for their advice.
This work was supported by the Japan Society for the Promotion of Science (JSPS) KAKENHI Grant Number 23KJ0881.

\printbibliography

\end{document}